\documentclass{article}
\usepackage{euscript}
\usepackage{amsmath}
\usepackage{graphicx}
\usepackage{amsthm}
\usepackage{amssymb}
\usepackage{epsfig}
\usepackage[all]{xy}
\usepackage{url}
\usepackage{tkz-berge}
\theoremstyle{theorem}
\newtheorem{theorem}{Theorem}[section]
\newtheorem{corollary}[theorem]{Corollary}

\newtheorem{lemma}[theorem]{Lemma}
\newtheorem{proposition}[theorem]{Proposition}

\newtheorem{definition}{Definition}[section]

\newtheorem{remark}{Remark}[section]

\numberwithin{equation}{section}
\title{On linear equations arising in Combinatorics (Part I)}
\author{Masood Aryapoor\footnote{E-mail: aryapoor2002@yahoo.com}}
\date{}

\begin{document}

\maketitle

\begin{abstract}

The main point of this paper is to present a class of equations over integers that one can check if they have a solution 
by checking a set of inequalities. The prototype of such equations is the equations appearing in the well-known Gale-Ryser theorem.

\end{abstract}

%%%%%%%%%%%%%%%%%%%%%%%%%%%%%%%%%%%%%%%%%%%%%%%%%%%%%%%%%%%%%%%%%%%%%%%%%%%%%%%%%%%%%%%%%%%%%%%%%%%%%%%%%%%
%%%%%%%%%%%%%%%%%%%%%%%%%%%%%%%%%%%%%%%%%%%%%%%%%%%%%%%%%%%%%%%%%%%%%%%%%%%%%%%%%%%%%%%%%%%%%%%%%%%%%%%%%%%
%%%%%%%%%%%%%%%%%%%%%%%%%%%%%%%%%%%%%%%%%%%%%%%%%%%%%%%%%%%%%%%%%%%%%%%%%%%%%%%%%%%%%%%%%%%%%%%%%%%%%%%%%%%
%%%%%%%%%%%%%%%%%%%%%%%%%%%%%%%%%%%%%%%%%%%%%%%%%%%%%%%%%%%%%%%%%%%%%%%%%%%%%%%%%%%%%%%%%%%%%%%%%%%%%%%%%%%

\begin{section}{Introduction}

This work is partly motivated by \cite{MA}. In his work, E. S. Mahmoodian successfully uses a single method, 
called the critical case method, to prove a number of well-known existential results in     
combinatorics, such as Berge's theorem on the existence of a matching of deficiency $d$, Tutte's theorem 
on the existence of 1-factors, the Gale-Ryser theorem, the Erdos-Gallai theorem, and Landau's theorem. 
The central question in this paper is the following seemingly philosophical question.
What is so special for such results that the existence of a  class of combinatorial object can be decided by a set of inequalities?    
Let me formulate the problem concretely using a somewhat general setting. Suppose that an 
$n\times m$ integral matrix $A$ and vectors $B\in   \mathbb{Z}^n$, $C,D\in \mathbb{Z}^m$ are given.   
Consider the following system of equations and inequalities 
\begin{equation}\label{system}
A X=B, C\leq X\leq D
\end{equation}
where $\begin{pmatrix}
a_1\\
\vdots\\
a_{m}
\end{pmatrix}\leq \begin{pmatrix}
b_1\\
\vdots\\
b_{m}
\end{pmatrix}$ means $a_i\leq b_i$ for every $i=1,...,m$. 
It is folkloric that  the problem of the existence of certain combinatorial objects, including the ones mentioned above, can be formulated as
the existence of an integral solution for a system of the above form. The heuristic question is then: Under what circumstances can the existence of an integral solution for 
system \ref{system} be answered by some  "reasonable" arithmetical conditions and inequalities? In fact, it is easy to derive such conditions which in general are only necessary.  
More precisely, one can easily see that system \ref{system} has a solution over integers only if 
(1) the equation $A X=B$ has a solution over integers, and (2) the system  has a solution over real numbers. Now, part (1) gives us a set of arithmetical conditions 
(here the relevant concept is the concept of smith normal forms). Part (2), belonging to the subject of Linear programming, gives rise to a collection of conditions in form of inequalities. 

Having been armed with the conditions, obtained by (1) and (2), one may wonder if these
 conditions are also sufficient for the existence of an integral solution for his/her favorite system. 
The main goal of this paper is to characterize those matrices $A$ for which this is the case.
Using this characterizations, one can in fact prove that a class of systems coming from combinatorics, such as the Gale-Ryser theorem and Landau's theorem among others,
are of this form, and perhaps not surprisingly, the corresponding necessary and sufficient conditions, turns out to be the familiar ones.  
However, the real importance of this characterization is that it 
gives a tool to check if the existence of a given combinatorial object can be decided by a set of arithmetical conditions and a set of inequalities of a similar nature.

It is, in fact, a very restrictive property for an integral matrix $A$ that the conditions given by (1) and (2), are sufficient for the existence of an integral solution for system \ref{system}. 
To obtain more powerful results, one therefore needs to introduce extra conditions. This issue will be pursued in the next paper.

\end{section}

%%%%%%%%%%%%%%%%%%%%%%%%%%%%%%%%%%%%%%%%%%%%%%%%%%%%%%%%%%%%%%%%%%%%%%%%%%%%%%%%%%%%%%%%%%%%%%%%%%%%%%%%%%%
%%%%%%%%%%%%%%%%%%%%%%%%%%%%%%%%%%%%%%%%%%%%%%%%%%%%%%%%%%%%%%%%%%%%%%%%%%%%%%%%%%%%%%%%%%%%%%%%%%%%%%%%%%%
%%%%%%%%%%%%%%%%%%%%%%%%%%%%%%%%%%%%%%%%%%%%%%%%%%%%%%%%%%%%%%%%%%%%%%%%%%%%%%%%%%%%%%%%%%%%%%%%%%%%%%%%%%%
%%%%%%%%%%%%%%%%%%%%%%%%%%%%%%%%%%%%%%%%%%%%%%%%%%%%%%%%%%%%%%%%%%%%%%%%%%%%%%%%%%%%%%%%%%%%%%%%%%%%%%%%%%%

\begin{section}{Farkas' Lemma}

In this section, the relevant material from Linear Programming is discussed.  
The notation $(u,v)$ stands for the standard inner product of two vectors $u,v\in \mathbb{R}^n$.

%%%%%%%%%%%%%%%%%%%%%%%%%%%%%%%%%%%%%%%%%%%%%%%%%%%%%%%%%%%%%%%%%%%%%%%%%%%%%%%%%%%%%%%%%%%%%%%%%%%%%%%%%%%

\begin{subsection}{Farkas' Lemma over $\mathbb{R}$}

We start with the following version of Farkas' lemma whose proof is given for the sake of completeness.  
 
\begin{lemma}  \label{Farkas, real}

Let $v_1,...,v_m\in \mathbb{R}^n$ and let $a_1\leq b_1,...,a_m\leq b_m$ be arbitrary real numbers. Then a vector $w\in   \mathbb{R}^n$ can be written as 
$w=\sum_{i=1}^m x_i v_i$ for some real numbers $a_1\leq x_1\leq  b_1,...,a_m\leq x_m\leq b_m$ if and only if for every vector $u\in \mathbb{R}^n$, we have

\begin{equation}\label{inequality}
(u,w)\leq \sum_{i=1}^m a_i\frac{(u,v_i)-|(u,v_i)|}{2}+\sum_{i=1}^m b_i\frac{(u,v_i)+|(u,v_i)|}{2}.
\end{equation}

\end{lemma}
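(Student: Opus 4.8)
The plan is to recognize the right-hand side of \eqref{inequality} as the support function of the polytope carved out by the box constraints, and then to invoke the separating hyperplane theorem. The only substantive input is that separation theorem; everything else is translation and sign-bookkeeping.

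First I would rewrite the two fractions on the right-hand side. Since $\frac{t-|t|}{2}=\min(t,0)$ and $\frac{t+|t|}{2}=\max(t,0)$ for every real $t$, setting $t=(u,v_i)$ shows that the right-hand side of \eqref{inequality} equals
\[
\sum_{i=1}^m a_i\min\bigl((u,v_i),0\bigr)+\sum_{i=1}^m b_i\max\bigl((u,v_i),0\bigr).
\]
I would then observe that this quantity is exactly $\max\bigl\{\,(u,\textstyle\sum_i x_i v_i): a_i\le x_i\le b_i\text{ for all }i\,\bigr\}$. Indeed, the objective $\sum_i x_i(u,v_i)$ decouples across the coordinates $x_i$, and each term is maximized over $[a_i,b_i]$ by taking $x_i=b_i$ when $(u,v_i)\ge 0$ and $x_i=a_i$ when $(u,v_i)<0$, which reproduces the displayed sum. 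Thus \eqref{inequality} asserts precisely that $(u,w)\le \max_{y\in K}(u,y)$ for every $u$, where $K=\{\sum_{i=1}^m x_i v_i: a_i\le x_i\le b_i\}$ is the image of the box $\prod_{i=1}^m[a_i,b_i]$ under the linear map $x\mapsto \sum_i x_i v_i$.

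The forward direction is then immediate: if $w=\sum_i x_i v_i$ with $a_i\le x_i\le b_i$, then $w\in K$, so $(u,w)\le\max_{y\in K}(u,y)$ for every $u$, which is exactly \eqref{inequality}. For the converse I would argue by contraposition using separation. The set $K$ is convex, being the linear image of a convex box, and compact, being the continuous image of the compact box $\prod_{i=1}^m[a_i,b_i]$. Hence if $w\notin K$, the separating hyperplane theorem applied to the point $w$ and the disjoint compact convex set $K$ yields a vector $u\in\mathbb{R}^n$ and a scalar $c$ with $(u,y)\le c<(u,w)$ for all $y\in K$; taking the supremum over $y\in K$ gives $\max_{y\in K}(u,y)\le c<(u,w)$, contradicting \eqref{inequality}. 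Therefore \eqref{inequality} forces $w\in K$, i.e. $w=\sum_i x_i v_i$ for some admissible $x_i$.

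The main obstacle is conceptual rather than computational: the whole statement must be recognized as the supporting-hyperplane characterization of a compact convex set, after which it is standard. The one point demanding genuine care is the identification of the right-hand side with the support function $\max_{y\in K}(u,y)$, i.e. correctly checking the sign cases in the coordinatewise optimization; the compactness of $K$ (which guarantees both that the maximum is attained and that the separation can be taken strict) should be noted explicitly so that the appeal to the separating hyperplane theorem is fully justified.
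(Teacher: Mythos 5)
Your proposal is correct and follows essentially the same route as the paper: the forward direction is the same coordinatewise sign-case estimate (you package it as the support function of the image $K$ of the box, the paper writes out the inequality termwise), and the converse is the identical argument — $K$ is compact and convex as the continuous linear image of $\prod_i[a_i,b_i]$, and a point not in $K$ is strictly separated by a hyperplane, which upon evaluating the support function at the separating vector contradicts \eqref{inequality}. No substantive difference.
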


\begin{proof}

First we prove the "only if" direction. So suppose that we have $w=\sum_{i=1}^m x_i v_i$ for some real numbers 
$a_1\leq x_1\leq  b_1,...,a_m\leq x_m\leq b_m$. Then for every $u\in \mathbb{R}^n$, we have 
$$(u,w)=\sum_{i=1}^m x_i(u,v_i)=\sum_{i=1}^m x_i\frac{(u,v_i)-|(u,v_i)|}{2}+\sum_{i=1}^m x_i\frac{(u,v_i)+|(u,v_i)|}{2}$$
$$\leq \sum_{i=1}^m a_i\frac{(u,v_i)-|(u,v_i)|}{2}+\sum_{i=1}^m b_i\frac{(u,v_i)+|(u,v_i)|}{2}.$$

Conversely, assume that the condition holds for a given vector $w\in \mathbb{R}^n$. On the contrary suppose that $w$ is not equal to
$\sum_{i=1}^m x_i v_i$ for every choice of real numbers $a_1\leq x_1\leq  b_1,...,a_m\leq x_m\leq b_m$. Let $C$ be the set of vectors 
$\sum_{i=1}^m y_i v_i$ where $a_1\leq y_1\leq  b_1,...,a_m\leq y_m\leq b_m$ are arbitrary real numbers. Since $C$ is the image of the compact set 
$[a_1,b_1]\times\dots\times [a_m,b_m]$ under the continuous map $(y_1,...,y_m)\mapsto \sum_{i=1}^m y_i v_i$, the set $C$ is a compact set. 
Furthermore, it is easy to see that $C$ is convex. Now, since $C$ is a compact convex set and $w\notin C$,
 by the hyperplane separation Lemma, there exists a vector $u_0\in \mathbb{R}^n$ such that for every $w'\in C$, we have 
$(u_0,w')<(u_0,w)$. This means that for all real numbers $a_1\leq y_1\leq  b_1,...,a_m\leq y_m\leq b_m$, we have
$$\sum_{i=1}^m y_i(u_0,v_i)=(u_0,\sum_{i=1}^m y_i v_i )<(u_0,w). $$ 
But,  setting $y_i=b_i$ if $(u_0,v_i)\geq 0$ and $y_i=a_i$ if $ (u_0,v_i)<0$, in this   inequality, gives us 
$$\sum_{i=1}^m a_i\frac{(u_0,v_i)-|(u_0,v_i)|}{2}+\sum_{i=1}^m b_i\frac{(u_0,v_i)+|(u_0,v_i)|}{2}<(u_0,w),$$
which is a contradiction.

\end{proof}

In order to apply Lemma  \ref{Farkas, real}, we need to check if Inequality \ref{inequality} holds for \underline{all vectors} $u\in \mathbb{R}^n$. 
However, it turns out that if this inequality holds for certain vectors in $\mathbb{R}^n$, then it holds for all vectors.
To examine this issue closely, suppose that   $w,v_1,...,v_m\in \mathbb{R}^n$  are given as in Lemma \label{Farkas, real}. 
Set $V=\sum_{i=1}^m\mathbb{R} v_i$ and let $V^{\bot}$ denote the set of all vectors $u\in\mathbb{R}^n$ such that 
$(u,v_i)=0$ for all $i=1,...,m$.  We note that if $u_0\in V^{\bot}$, then Inequality \ref{inequality} holds for $u=u_0$ and $u=-u_0$ if and only if
$(u_0,w)=0$. So if we choose a basis $w_1,...,w_k\in \mathbb{R}^n$ for the vector space $V^{\bot}$ over $\mathbb{R}$,
then \ref{inequality} holds for all $u\in V^{\bot}$ if and only if $(w_i,w)=0$ for every $i=1,...,k$. 

An arbitrary vector $u_0\in \mathbb{R}^n$
can be written as $u_0=u_0'+u_0''$ where $u_0'\in V$ and $u_0''\in V^{\bot}$. It is easy to see that if Inequality \ref{inequality} holds for 
$u=u_0'$ and $u=u_0''$, then it holds for $u_0$ as well. 
The above discussion leads us to our ''first reduction'':

\noindent\textbf{First reduction:} Inequality \ref{inequality} holds for all  vectors  $u\in \mathbb{R}^n$, 
if and only if it holds for $u=w_1,-w_1,...,w_k,-w_k$, and all vectors $u\in\sum_{i=1}^m\mathbb{R} v_i$. 
Moreover the inequality holds for $u=w_1,-w_1,...,w_k,-w_k$, if and only if $(w_i,w)=0$ for every $i=1,...,k$, if and only if $w\in  \sum_{i=1}^m\mathbb{R} v_i$. 

Now suppose that the inequality holds for some $u\in \mathbb{R}^n$  and let $u=r u'$ where $u' \in \mathbb{R}^n$ and $r$ is a positive real number. 
Then we have 
$$r (u',w)=(u,w)\leq \sum_{i=1}^m a_i\frac{(u,v_i)-|(u,v_i)|}{2}+\sum_{i=1}^m b_i\frac{(u,v_i)+|( u,v_i)|}{2}=$$
$$ r(\sum_{i=1}^m a_i\frac{( u',v_i)-|(u',v_i)|}{2}+\sum_{i=1}^m b_i\frac{(u',v_i)+|(u',v_i)|}{2}).$$
Since $r$ is positive, it follows that the inequality holds for $w'$ as well. So our ''second reduction'' is the following:

\noindent \textbf{Second reduction:}  Inequality \ref{inequality} holds for a  vector  $u\in \mathbb{R}^n$, if and only if
it holds for $r u$ where $r$ is an arbitrary positive real number. 

In order to elaborate the second reduction, we introduce a few definitions. Consider the following equivalence relation on 
$\mathbb{R}^n\setminus\{0\}$: two vectors $v,v'\in\mathbb{R}^n\setminus\{0\}$ are equivalent if $v'=r v$  for a positive real number $r$. It is easy to see that this is in fact 
an equivalence relation. Define  $\mathbb{R}\mathbb{P}_+^{n-1}$ to be set of all the equivalent classes of this equivalence relation.
The equivalence class containing $v\in\mathbb{R}^n\setminus\{0\}$ is denoted by $[v]$ and elements of $\mathbb{R}\mathbb{P}_+^{n-1}$ are called points.     
 
\begin{definition}

Assume that vectors $v_1,...,v_m\in \mathbb{R}^n$ are given. 
A nonzero vector $u\in \sum_{i=1}^m\mathbb{R} v_i$ is called $\{v_1,...,v_m\}$--decomposable (or decomposable with respect to  
$v_1,...,v_m$)
if there exist nonzero vectors $u',u''\in\sum_{i=1}^m\mathbb{R} v_i$ such that $u=u'+u''$, $[u'],[u'']\neq [u]$  and $(u',v)(u'',v)\geq 0$ for every $v\in\{v_1,...,v_m\}$. 
If a nonzero vector in $\sum_{i=1}^m\mathbb{R} v_i$ is not $\{v_1,...,v_m\}$--decomposable, 
 it is called $\{v_1,...,v_m\}$--indecomposable (or indecomposable with respect to  
$v_1,...,v_m$).

\end{definition}

It is clear from the definition that a vector $u\in\sum_{i=1}^m\mathbb{R} v_i$ is $\{v_1,...,v_m\}$--decomposable
 if and only if $ r u$ is $\{v_1,...,v_m\}$--decomposable for some positive real 
number $r$. In other words, if $[u]=[u_1]\in\mathbb{R}\mathbb{P}_+^{n-1} $, then $u$ is  $\{v_1,...,v_m\}$--decomposable 
if and only if $u_1$ is $\{v_1,...,v_m\}$--decomposable. 
A point $x\in\mathbb{R}\mathbb{P}_+^{n-1} $ is  called $\{v_1,...,v_m\}$--decomposable if $x=[u]$ 
 for some $\{v_1,...,v_m\}$--decomposable vector $u\in \sum_{i=1}^m\mathbb{R} v_i$.
By the above argument, this definition is  well-defined, i.e. it does not depend on $u$. 
In a similar way, we define $\{v_1,...,v_m\}$--indecomposable points in $\mathbb{R}\mathbb{P}_+^{n-1} $.

We want to characterize  the  $\{v_1,...,v_m\}$--indecomposable points in $\mathbb{R}\mathbb{P}_+^{n-1} $. To do so, we introduce some notations. 
Given a set $I\subset \{1,...,m\}$, we define the vector space $V(I)\subset \mathbb{R}^n $
to be the set of all vectors $u\in\sum_{i=1}^m\mathbb{R} v_i$ such that $(u,v_i)=0$ for every $i\in I$. It is clear that $V(\{1,...,m\})=\{0\}$ and $V(I)\subset V(J)$ if $J\subset I$. 
Moreover, for a nonzero vector $u\in\sum_{i=1}^m\mathbb{R} v_i$, 
there is a unique set $I_u\subset \{1,...,m\}$, such that $u\in V(I_u)$, but $u\notin V(J)$ for every $I_u\subsetneq J\subset \{1,...,m\}$.
In fact, we have $I_u=\{i\in\{1,...,m\}|(u,v_i)=0\}$. 
Using these notations, we can state the following lemma.  

\begin{lemma}\label{indecomposable, characterization}

(1) A vector $u\in\sum_{i=1}^m\mathbb{R} v_i$ is $\{v_1,...,v_m\}$--indecomposable if and only if $V(I_u)$, as a vector space over  $\mathbb{R}$, has dimension one.\\
(2) The number  of $\{v_1,...,v_m\}$--indecomposable points in  $\mathbb{R}\mathbb{P}_+^{n-1} $ is finite. Moreover 
every nonzero vector   $u\in \sum_{i=1}^m\mathbb{R} v_i$ can be written as $u=u_1+\cdots+u_l$, where $u_i$'s are 
$\{v_1,...,v_m\}$--indecomposable, such that  $(u,v)(u_i,v)\geq 0$ for every $i=1,...,l$ and every $v\in\{v_1,...,v_m\}$.

\end{lemma}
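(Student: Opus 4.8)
The plan is to prove part (1) by an explicit geometric construction and part (2) by combining a finiteness count with a decomposition of $u$ into the extreme rays of an associated pointed cone. Throughout I will lean on the following elementary \emph{sign propagation} observation: if $u=u'+u''$ and $(u',v_i)(u'',v_i)\ge 0$, then $(u,v_i)(u',v_i)=(u',v_i)^2+(u',v_i)(u'',v_i)\ge 0$, so $u'$ (and likewise $u''$) is sign-compatible with $u$ at $v_i$; moreover if $(u,v_i)=0$ then both $(u',v_i)$ and $(u'',v_i)$ vanish. In particular any decomposition satisfying the sign condition forces $u',u''\in V(I_u)$. For part (1), if $\dim V(I_u)\ge 2$ I pick $z\in V(I_u)$ independent from $u$ and set $u'=\tfrac12 u+\varepsilon z$, $u''=\tfrac12 u-\varepsilon z$; for $i\in I_u$ both inner products vanish, while for the finitely many $i\notin I_u$ the term $\tfrac12(u,v_i)\ne 0$ dominates $\varepsilon(z,v_i)$ once $\varepsilon$ is small, so all sign products are $\ge 0$ and $[u'],[u'']\ne[u]$, exhibiting $u$ as decomposable. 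Conversely, if $\dim V(I_u)=1$ then $V(I_u)=\mathbb{R}u$, and by sign propagation any admissible decomposition has $u'=su$, $u''=tu$ with $s+t=1$; the conditions $[u'],[u'']\ne[u]$ together with $u',u''\ne 0$ force $s,t<0$, contradicting $s+t=1$. This yields the stated equivalence.

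For finiteness in part (2), an indecomposable point $[u]$ satisfies $V(I_u)=\mathbb{R}u$ by part (1), so $[u]$ lies on the line $V(I_u)$, which is determined by the subset $I_u\subseteq\{1,\dots,m\}$ and contributes exactly two points of $\mathbb{R}\mathbb{P}_+^{n-1}$. Since there are at most $2^m$ subsets, there are at most $2^{m+1}$ indecomposable points.

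For the decomposition I would work inside $W=V(I_u)$ and form the cone $K_u=\{x\in W:(u,v_i)(x,v_i)\ge 0\text{ for all }i\}$. It is polyhedral and contains $u$, and its lineality space is $\{x\in W:(x,v_i)=0\ \forall i\notin I_u\}=V(\{1,\dots,m\})=\{0\}$, so $K_u$ is pointed; hence it is the conical hull of finitely many extreme-ray generators $r_1,\dots,r_N$, and $u=\sum_j\lambda_j r_j$ with $\lambda_j\ge 0$. Each nonzero summand $u_j=\lambda_j r_j$ lies in $K_u$, which is precisely the sign-compatibility $(u,v)(u_j,v)\ge 0$. It remains to see each $r_j$ is indecomposable: if not, part (1) gives $r_j=r'+r''$ with the sign condition, sign propagation places $r',r''$ in $W$ and, combining compatibility of $r'$ with $r_j$ and of $r_j$ with $u$, in $K_u$; the extreme-ray property then forces $r',r''\in\mathbb{R}_{\ge0}r_j$, contradicting $[r'],[r'']\ne[r_j]$. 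Thus the $u_j$ are the required indecomposable summands.

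The main obstacle I anticipate is this decomposition step, and in particular two points demanding care: one must pass to $W=V(I_u)$ before forming the cone, since the analogous cone in $V$ is typically not pointed (the coordinates $i\in I_u$ are unconstrained there), and one must apply sign propagation twice, once to certify that the extreme-ray generators are indecomposable and once to certify that the final summands are compatible with the original $u$ rather than merely with intermediate vectors. An alternative to invoking the Minkowski–Weyl description of $K_u$ is an induction on $\dim V(I_u)$, moving $u$ along a line within $W$ until a new hyperplane $(x,v_i)=0$ is met; pointedness guarantees the line leaves $K_u$, producing boundary points with strictly smaller $\dim V(I_{\cdot})$, after which the same sign-propagation bookkeeping reassembles the pieces.
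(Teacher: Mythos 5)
Your proof is correct, but it takes a genuinely different route from the paper's in the two substantive steps. For the ``$\dim V(I_u)\ge 2$ implies decomposable'' half of (1), your symmetric perturbation $u=(\tfrac12 u+\varepsilon z)+(\tfrac12 u-\varepsilon z)$ with $z\in V(I_u)$ independent of $u$ is simpler than what the paper does: the paper builds the open chamber $C\subset V(I_u)$ of vectors strictly sign-matching $u$, walks along the segment from $u$ toward an independent vector until it exits $C$ at a boundary point $\alpha_0$, and decomposes $u=r\alpha_0+(u-r\alpha_0)$. That heavier construction is not wasted, though: $\alpha_0$ satisfies $(\alpha_0,v_j)=0$ for some $j\notin I_u$, so both $V(I_{\alpha_0})$ and $V(I_{u-r_0\alpha_0})$ (with $r_0=\min (u,v)/(\alpha_0,v)$ over $v$ with $(\alpha_0,v)\neq 0$) have strictly smaller dimension, and the paper proves the decomposition in (2) by a self-contained induction on $\dim V(I_u)$, recombining the pieces with an explicit sign-bookkeeping identity. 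You instead get (2) in one shot by identifying the relevant indecomposables as the extreme rays of the pointed polyhedral cone $K_u\subset V(I_u)$ and invoking Minkowski--Weyl; your supporting checks (pointedness via $V(\{1,\dots,m\})=\{0\}$, transitivity of sign-compatibility through $r_j$, and extremality forcing $r',r''\in\mathbb{R}_{\ge 0}r_j$) are all sound, and your finiteness count agrees with the paper's. The trade-off is that you outsource finite generation of a pointed polyhedral cone to a standard theorem, where the paper works from first principles using only the hyperplane separation already invoked in its Lemma 2.1; in exchange your argument is shorter and makes the structure more transparent, namely that the indecomposable points sign-compatible with $u$ are exactly the extreme rays of $K_u$. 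The inductive alternative you sketch at the end is essentially the paper's actual proof.
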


\begin{proof}

(1) To prove the "if direction", suppose that $u\in\sum_{i=1}^m\mathbb{R} v_i$ is $\{v_1,...,v_m\}$--decomposable. 
We need to show that $V(I_u)$  has dimension greater than one.
Since $u\in\sum_{i=1}^m\mathbb{R} v_i$ is $\{v_1,...,v_m\}$--decomposable,
there exist nonzero vectors $u',u''\in\sum_{i=1}^m\mathbb{R} v_i$ such that $u=u'+u''$, $[u'],[u'']\neq [u]$  and $(u',v)(u'',v)\geq 0$ for  every $v\in\{v_1,...,v_m\}$. 
Since  $(u,v)=(u',v)+(u'',v)$ and $ (u',v)(u'',v)\geq 0$ for  every $v\in\{v_1,...,v_m\}$, 
we must have $u',u''\in V(I_u)$. To show that $V(I_u)$  has dimension greater than one, it is enough 
to prove that $u,u'$ (or $u,u''$) are linearly independent over $\mathbb{R}$. If, on the contrary, $u,u'$ (and $u,u''$) are linearly dependent over $\mathbb{R}$, 
then $u'=r' u$ and $u''=r'' u$ for some nonzero real numbers $r',r''$. But then the conditions $u=u'+u''$   and $(u',v)(u'',v)\geq 0$ for  every $v\in\{v_1,...,v_m\}$, 
imply that $r'+r''=1$ and $ r' r''(u,v)^2\geq 0$ for  every $v\in\{v_1,...,v_m\}$. If $(u,v)=0$ for all  $v\in\{v_1,...,v_m\}$, then,
since $u\in  \sum_{i=1}^m\mathbb{R} v_i$, we would have $(u,u)=0$, i.e. $u=0$, a contradiction. So there is some 
$v_0\in\{v_1,...,v_m\}$ for which $(u,v_0)\neq 0$ and consequently, by $ r' r''(u,v_0)^2\geq 0$, we conclude that  $r' r''>0$. So, the  
numbers $r'$ and $r''$ are positive, which in turn implies that $[u']=[u'']=[u]$, a contradiction. This proves the "if direction''. 

To prove the other direction, suppose that $u$ is $\{v_1,...,v_m\}$--indecomposable, but   
$V(I_u)$ has dimension greater than one. 
In the vector space $V(I_u)$, we consider the "chamber" $C$ consisting of all vectors $\alpha\in V(I_u)$ such that 
for every $v\in \{v_1,...,v_m\}$, we have $(\alpha,v)> 0$, if $(u,v)>0$, and    
$ (\alpha,v)< 0$ if $(u,v)<0 $. The set $C$ is nonempty since $u\in C$. It is easy to see that $C$ is 
an open subset of the vector space $V(I_u)$ (where the topology is just the induced topology from $\mathbb{R}^n$). 
I claim that there exist a nonzero vector in $\bar{C}\setminus C$, where $\bar{C}$ is the closure of $C$ in $V(I_u)$.
To show this, we choose a nonzero vector  $\alpha \in V(I_u)$ such that $u$ and $\alpha$ are $\mathbb{R}$--linearly independent. 
This is possible because the dimension of $V(I_u)$ is greater than one. We may assume that $\alpha\notin C$, since otherwise we take $-\alpha$ which satisfies $-\alpha\notin C$. 
Now, it is easy to see that 
the set $\{t\in [0,1]| (1-t) u+t \alpha\in C\}$ is an open connected subset of the interval $[0,1]$, containing $0$, but not $1$. So it must be of form $[0,t_0)$ for some $0<t_0< 1$. 
It then follows that $\alpha_0=(1-t_0) u+t_0 \alpha$ is a nonzero vector on the boundary of $C$, i.e. $\alpha_0\in \bar{C}\setminus C$.

Note  that we have 
$(\alpha_0,v)\geq 0$, if $(u,v)>0$, and    $ (\alpha_0,v)\leq 0$ if $(u,v)<0$ for every $v\in \{v_1,...,v_m\}$. It follows 
that there is a positive real number $r$, small enough such that $(\alpha_0,v)(u,v) \geq r(\alpha_0,v)^2$ for  every $v\in\{v_1,...,v_m\}$, which is equivalent to
$(r \alpha_0,v)(u-r \alpha_0,v)\geq 0$ for  every $v\in\{v_1,...,v_m\}$. Since $\alpha_0\notin C$ and $\alpha_0\neq 0$, 
there is some $v_j\in \{v_1,...,v_m\}$ such that $(u,v_j)\neq 0$, but $(\alpha_0,v_j)=0$. In particular 
we have $\alpha_0\notin \mathbb {R} u$ from which it follows that $[u]\neq [r \alpha_0]$ and $[u]\neq [u-r \alpha_0]$.  Since $u=r \alpha_0+(u-r \alpha_0)$, 
we conclude, from the above facts, that the vector $u$ is $\{v_1,...,v_m\}$--decomposable, a contradiction. This finishes the proof.  

(2)  First note that if $V(I_u)=V(I_{u'})$ for two $\{v_1,...,v_m\}$--indecomposable vectors $u$ and $u'$,
 then $u=r u'$ for some nonzero real number $r$, because, we have 
$u\in V(I_u), u'\in V(I_{u'})$, and by part (1), the vector spaces $V(I_u)$ and $V(I_{u'})$ are one dimensional. 
On the other hand, a point $x=[u]\in \mathbb{R}\mathbb{P}_+^{n-1}$ is  $\{v_1,...,v_m\}$--indecomposable if and only if $V(I_u)$ is one-dimensional.
From these observations, we conclude that    the number  of $\{v_1,...,v_m\}$--indecomposable points in  $\mathbb{R}\mathbb{P}_+^{n-1} $ is at most twice the number of 
sets $I\subset \{1,...,m\}$ for which $V(I)$ is one-dimensional. In particular, this number is finite.

To prove the second statement, we use induction on the dimension $d$ of the vector space $V(I_u)$. 
 If $d=0$, then $u=0$ and there is nothing to prove. 
If $d=1$, then  by part (1), $u$ is $\{v_1,...,v_m\}$--indecomposable and we are done. 
Now suppose that $d>1$.  We consider the chamber $C$ and the vector $\alpha_0$, used in part (1). 
Note that we have $V(I_{\alpha_0})\subset V(I_u)$, because $\alpha_0\in V(I_u)$. As we have seen, there is some
 $v_j\in \{v_1,...,v_m\}$ such that $(u,v_j)\neq 0$, but $(\alpha_0,v_j)=0$, i.e. $u\notin V(I_{\alpha_0})$. In particular, it follows that  $V(I_{\alpha_0})$
has dimension less than $d$. 

Now, let $r_0$ be the minimum of the 
numbers $\frac{(u,v)}{(\alpha_0,v)}$ where $v\in \{v_1,...,v_m\}$ with $(\alpha_0,v)\neq 0$. Since both $u$ and $\alpha_0$ belong
 to   $\bar{C}$, we see that $r_0$ is positive.  Furthermore, by the choice of $r_0$, 
we have, $ (r_0 \alpha_0,v)(u-r_0 \alpha_0,v)\geq 0$ for  every $v\in\{v_1,...,v_m\}$.
Since $r_0=  \frac{(u,v_k)}{(\alpha_0,v_k)}$ for some $v_k\in \{v_1,...,v_m\}$  (which implies $(u,v_k)\neq 0$, 
but  $(u-r_0 \alpha_0,v_k)=0$), we see that $V(I_{u-r_0\alpha_0})$ has dimension less than $d$. Note that $u-r_0\alpha_0\neq 0$, since, as we have 
seen $\alpha_0\notin \mathbb {R} u$. 

Having proved that both vectors spaces $V(I_{\alpha_0})$ and $V(I_{u-r_0\alpha_0})$ have dimensions less than $d$, 
we can use induction, to obtain desirable presentations $r_0\alpha_0=u'_1+\cdots+u'_{l'}$ and $u-r_0\alpha_0=u''_1+\cdots+u''_{l''}$.
I claim that $u=u'_1+\cdots+u'_{l'}+u''_1+\cdots+u''_{l''}$ is the desired presentation for $u$. For every $i=1,...,l'$, and every $v\in\{v_1,...,v_m\}$, we have,
$$((u,v)(u'_i,v))((u'_i,v)(r_0\alpha_0,v))=(u,v)(r_0\alpha_0,v)(u'_i,v)^2$$
$$=(r_0\alpha_0,v)(u-r_0\alpha_0,v)(u'_i,v)^2+(r_0\alpha_0,v)^2(u'_i,v)^2\geq 0.$$
Now we use the fact that the presentation $r_0\alpha_0=u'_1+\cdots+u'_{l'}$ has the corresponding 
properties. So, either $(u'_i,v)(r_0\alpha_0,v)>0$, in which case we must have  $(u,v)(u'_i,v)\geq 0$, by the above inequality, 
or  $(u'_i,v)(r_0\alpha_0,v)=0$, in which case we must have $(u'_i,v)=0$, 
and consequently $(u,v)(u'_i,v)\geq 0$. Similarly, one can show that 
$(u,v)(u''_i,v)\geq 0$ for every $i=1,...,l''$, and every $v\in\{v_1,...,v_m\}$. Hence the proof is  complete. 
 
\end{proof}

Using the above discussion, we can restate Lemma \ref{Farkas, real} in the following way.  

\begin{theorem}  \label{Farkas, real and indecomposable}

Let $v_1,...,v_m\in \mathbb{R}^n$ be arbitrary vectors.  
Furthermore assume that $a_1\leq b_1,...,a_m\leq b_m$ are given real numbers.   
Then a vector $w\in   \mathbb{R}^n$ can be written as 
$w=\sum_{i=1}^m x_i v_i$ for some real numbers $a_1\leq x_1\leq  b_1,...,a_m\leq x_m\leq b_m$ if and only if  $w\in  \sum_{i=1}^m\mathbb{R} v_i,$ and
$$(u,w)\leq \sum_{i=1}^m a_i\frac{(u,v_i)-|(u,v_i)|}{2}+\sum_{i=1}^m b_i\frac{(u,v_i)+|(u,v_i)|}{2},$$
for   every  $\{v_1,...,v_m\}$--indecomposable point $[u]\in \mathbb{R}\mathbb{P}_+^{n-1} $. 

\end{theorem}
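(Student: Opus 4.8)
The plan is to deduce this theorem from Lemma \ref{Farkas, real} together with the two reductions and the decomposition furnished by Lemma \ref{indecomposable, characterization}. Write $\Phi(u)$ for the right-hand side of Inequality \ref{inequality}, so that Lemma \ref{Farkas, real} asserts that $w=\sum_{i=1}^m x_iv_i$ with $a_i\le x_i\le b_i$ is solvable if and only if $(u,w)\le \Phi(u)$ for \emph{every} $u\in\mathbb{R}^n$. Since $\tfrac{t-|t|}{2}=\min(t,0)$ and $\tfrac{t+|t|}{2}=\max(t,0)$, one has $\Phi(u)=\sum_{i=1}^m\max\bigl(a_i(u,v_i),\,b_i(u,v_i)\bigr)$; in particular the $i$-th summand equals $b_i(u,v_i)$ when $(u,v_i)\ge 0$ and $a_i(u,v_i)$ when $(u,v_i)<0$. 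Thus the theorem follows once I show that ``$(u,w)\le\Phi(u)$ for all $u\in\mathbb{R}^n$'' is equivalent to the pair of conditions ``$w\in V$'' and ``$(u,w)\le\Phi(u)$ for every $\{v_1,\dots,v_m\}$--indecomposable $u$'', where $V=\sum_{i=1}^m\mathbb{R}v_i$.

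The forward implication is immediate: if $w=\sum x_iv_i$ then $w\in V$ by definition, and by Lemma \ref{Farkas, real} the inequality holds for every $u$, hence in particular for every indecomposable $u$.

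For the converse, assume $w\in V$ and that the inequality holds for all indecomposable vectors. By the First reduction it suffices to verify $(u,w)\le\Phi(u)$ for every $u\in V$, the contribution of $V^{\bot}$ being taken care of precisely by the hypothesis $w\in V$. Fix a nonzero $u\in V$ (the case $u=0$ being trivial) and apply Lemma \ref{indecomposable, characterization}(2) to write $u=u_1+\cdots+u_l$ with each $u_j$ indecomposable and $(u,v)(u_j,v)\ge 0$ for every $v\in\{v_1,\dots,v_m\}$. By the Second reduction the assumed inequality holds for the representative $u_j$ of each indecomposable point $[u_j]$, so $(u_j,w)\le\Phi(u_j)$, and summing gives $(u,w)=\sum_j(u_j,w)\le\sum_j\Phi(u_j)$. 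The proof is therefore reduced to the additivity statement $\sum_j\Phi(u_j)=\Phi(u)$.

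To establish additivity I would argue summand by summand in $i$. If $(u,v_i)>0$ the sign condition forces $(u_j,v_i)\ge 0$ for all $j$, so each $j$ contributes $b_i(u_j,v_i)$ and these sum to $b_i(u,v_i)$, the $i$-th summand of $\Phi(u)$; the case $(u,v_i)<0$ is symmetric. The delicate case, and the main obstacle, is $(u,v_i)=0$, i.e.\ $i\in I_u$: here the sign condition gives no information, the numbers $(u_j,v_i)$ may a priori have mixed signs, and then $\sum_j\max(a_i(u_j,v_i),b_i(u_j,v_i))=(b_i-a_i)\sum_{(u_j,v_i)>0}(u_j,v_i)$ can be strictly positive while the $i$-th summand of $\Phi(u)$ is $0$. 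Overcoming this is exactly why one must use the specific decomposition produced by the construction in the proof of Lemma \ref{indecomposable, characterization}(2): there every piece is built inside $V(I_u)$ — one starts from $\alpha_0\in\bar C\subset V(I_u)$ and each subsequent vector $r_0\alpha_0$ and $u-r_0\alpha_0$ again lies in $V(I_u)$, so recursively $u_j\in V(I_u)$, whence $(u_j,v_i)=0$ for every $i\in I_u$ and every $j$. With this extra feature the problematic case collapses to $0=0$, additivity holds, and the converse — hence the theorem — follows.
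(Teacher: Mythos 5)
Your proof is correct and follows essentially the same route as the paper: Lemma \ref{Farkas, real}, the two reductions, the decomposition of Lemma \ref{indecomposable, characterization}(2), and additivity of the right-hand side over the pieces. In fact you are more careful than the paper at the one delicate point: the paper claims that the sign condition $(u,v)(u_j,v)\ge 0$ alone yields $|(u,v_i)|=\sum_j|(u_j,v_i)|$, which is not literally true when $(u,v_i)=0$ (the numbers $(u_j,v_i)$ could then have mixed signs, and the resulting inequality $\sum_j|(u_j,v_i)|\ge|(u,v_i)|$ goes the wrong way); your observation that the decomposition actually constructed in the proof of Lemma \ref{indecomposable, characterization}(2) keeps every piece inside $V(I_u)$, so that $(u_j,v_i)=0$ whenever $(u,v_i)=0$, is exactly the supplement needed to close that gap.
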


\begin{proof}

Using Lemma \ref{Farkas, real} and "First reduction", we only need to prove that
if the inequality holds for every $u=u_1,...,u_l$, then 
it holds for all  vectors in $\mathbb{R}v_1+\cdots+\mathbb{R}v_m$. 
So let $u\in\mathbb{R}v_1+\cdots+\mathbb{R}v_m$ be nonzero. By Lemma  \ref{indecomposable, characterization}, 
$u$ can be written as $u=u_1+\cdots+u_l$, where the points $[u_1],...,[u_l]$ are 
$\{v_1,...,v_m\}$--indecomposable, such that  $(u,v)(u_i,v)\geq 0$ for every $i=1,...,s$ and every $v\in\{v_1,...,v_m\}$.  The former property of $u_1,...,u_l$ implies that 
$(u,v)=\sum_{i=1}^{l}(u_i,v)$ for every vector $v\in \mathbb{R}^n$ and 
the latter property of $u_1,...,u_l$ implies that  $|(u,v)|=\sum_{i=1}^{l}|(u_i,v)|$ for $v\in\{v_1,...,v_m\}$. Since the inequality holds for 
$\{v_1,...,v_m\}$--indecomposable points $[u_1],...,[u_l]$,  it holds, by "Second Reduction", for $u_1,...,u_l$, and therefore, we have
$$(u,w)=\sum_{j=1}^{l}(u_j,w)\leq \sum_{j=1}^{l}( \sum_{i=1}^m a_i\frac{(u_j,v_i)-|(u_j,v_i)|}{2}+\sum_{i=1}^m b_i\frac{(u_j,v_i)+|(u_j,v_i)|}{2})$$
$$\leq   \sum_{i=1}^m a_i\frac{\sum_{j=1}^{l}(u_j,v_i)-\sum_{j=1}^{l}|(u_j,v_i)|}{2}+\sum_{i=1}^m b_i\frac{\sum_{j=1}^{l}(u_j,v_i)+\sum_{j=1}^{l}|(u_j,v_i)|}{2}$$
$$=\sum_{i=1}^m a_i\frac{(u,v_i)-|(u,v_i)|}{2}+\sum_{i=1}^m b_i\frac{(u,v_i)+|(u,v_i)|}{2},$$
i.e. the inequality holds for $u$ as well, and we are done.

\end{proof}

\end{subsection}

%%%%%%%%%%%%%%%%%%%%%%%%%%%%%%%%%%%%%%%%%%%%%%%%%%%%%%%%%%%%%%%%%%%%%%%%%%%%%%%%%%%%%%%%%%%%%%%%%%%%%%%%%%%
%%%%%%%%%%%%%%%%%%%%%%%%%%%%%%%%%%%%%%%%%%%%%%%%%%%%%%%%%%%%%%%%%%%%%%%%%%%%%%%%%%%%%%%%%%%%%%%%%%%%%%%%%%%

\begin{subsection}{Farkas' Lemma over $\mathbb{Q}$}

In this part, we prove that Theorem  \ref{Farkas, real and indecomposable} holds over rational numbers. More precisely, we have the following  result.

\begin{theorem} \label{Farkas, rational}

Let $v_1,...,v_m\in \mathbb{Q}^n$ be some vectors and suppose that $a_1\leq b_1,...,a_m\leq b_m$ are arbitrary rational numbers.   
Then a vector $w\in   \mathbb{Q}^n$ can be written as 
$w=\sum_{i=1}^m y_i v_i$ for some rational numbers $a_1\leq y_1\leq  b_1,...,a_m\leq y_m\leq b_m$ if and only if  
$w\in  \sum_{i=1}^m\mathbb{Q} v_i,$ and
$$(u,w)\leq \sum_{i=1}^m a_i\frac{(u,v_i)-|(u,v_i)|}{2}+\sum_{i=1}^m b_i\frac{(u,v_i)+|(u,v_i)|}{2},$$
 for   every  $\{v_1,...,v_m\}$--indecomposable point $[u]\in \mathbb{R}\mathbb{P}_+^{n-1} $.

\end{theorem}

\begin{proof}

The "only if" direction follows directly from  Theorem  \ref{Farkas, real and indecomposable}. To prove the other direction, assume that the condition holds for 
a given vector $w\in \mathbb{Q}^n$. By Theorem \ref{Farkas, real and indecomposable}, 
there exist real numbers  $a_1\leq x_1\leq  b_1,...,a_m\leq x_m\leq b_m$ such that 
$w=\sum_{i=1}^m x_i v_i$. If $x_1,...,x_m$ are rational numbers, then we are done. So suppose that 
some of the numbers $x_1,...,x_m$ are not rational. Without loss of generality, we assume that $x_1,...,x_r$ are not rational, but $x_{r+1},...,x_m$ are rational. 
Set $w'=w-\sum_{i=r+1}^{m}x_i v_i$, and
let $P$ denote the set of all vectors $(z_1,...,z_r)\in\mathbb{R}^r $ such that $\sum_{i=1}^{r}z_iv_i=0$. 
The set $P$ is a vector space over $\mathbb{R}$ and since   $v_1,...,v_r\in \mathbb{Q}^n$,
the vector space $P$ has a basis $\alpha_1,...,\alpha_s\in \mathbb{Q}^r$ over $\mathbb{R}$. 
On the other hand, since $w'\in \mathbb{Q}^n$, and  $w'=\sum_{i=1}^{r}x_iv_i$, we must have 
$w'=\sum_{i=1}^{r}q_i v_i$ for some rational numbers $q_1,...,q_r$. Now, we have $(x_1,...,x_r)-(q_1,...,q_r)\in P$ and therefore 
$$(x_1,...,x_r)=(q_1,...,q_r)+t_1\alpha_1+\cdots+t_s \alpha_s,$$
for some numbers $t_1,...,t_s\in \mathbb{R}$. Since $a_i<x_i<b_i$, for every $i=1,...,r$, 
we can choose rational numbers $p_i$, close enough to $t_i$, such that
the rational numbers $y_1,...,y_r$, defined via, 
$$(y_1,...,y_r)=(q_1,...,q_r)+p_1\alpha_1+\cdots+p_s \alpha_s,$$
satisfy  $a_i<y_i<b_i$, for every $i=1,...,r$. It is then easy to see that we have $w=\sum_{i=1}^r y_iv_i+\sum_{i=r+1}^m x_i v_i$ 
which is the desired presentation.

\end{proof}

\begin{remark}

One can show that, if $v_1,...,v_m\in  \mathbb{Q}^n$, as in the above theorem,  then 
 the vectors $u_1,...,u_l$ (such that $[u_1],...,[u_l]$ give us all the $\{v_1,...,v_m\}$--indecomposable points in  $\mathbb{R}\mathbb{P}_+^{n-1} $)
can be chosen to be in $\mathbb{Q}^n$. 

\end{remark}

\end{subsection}

\end{section}

%%%%%%%%%%%%%%%%%%%%%%%%%%%%%%%%%%%%%%%%%%%%%%%%%%%%%%%%%%%%%%%%%%%%%%%%%%%%%%%%%%%%%%%%%%%%%%%%%%%%%%%%%%%
%%%%%%%%%%%%%%%%%%%%%%%%%%%%%%%%%%%%%%%%%%%%%%%%%%%%%%%%%%%%%%%%%%%%%%%%%%%%%%%%%%%%%%%%%%%%%%%%%%%%%%%%%%%
%%%%%%%%%%%%%%%%%%%%%%%%%%%%%%%%%%%%%%%%%%%%%%%%%%%%%%%%%%%%%%%%%%%%%%%%%%%%%%%%%%%%%%%%%%%%%%%%%%%%%%%%%%%
%%%%%%%%%%%%%%%%%%%%%%%%%%%%%%%%%%%%%%%%%%%%%%%%%%%%%%%%%%%%%%%%%%%%%%%%%%%%%%%%%%%%%%%%%%%%%%%%%%%%%%%%%%%
%%%%%%%%%%%%%%%%%%%%%%%%%%%%%%%%%%%%%%%%%%%%%%%%%%%%%%%%%%%%%%%%%%%%%%%%%%%%%%%%%%%%%%%%%%%%%%%%%%%%%%%%%%%
%%%%%%%%%%%%%%%%%%%%%%%%%%%%%%%%%%%%%%%%%%%%%%%%%%%%%%%%%%%%%%%%%%%%%%%%%%%%%%%%%%%%%%%%%%%%%%%%%%%%%%%%%%%
%%%%%%%%%%%%%%%%%%%%%%%%%%%%%%%%%%%%%%%%%%%%%%%%%%%%%%%%%%%%%%%%%%%%%%%%%%%%%%%%%%%%%%%%%%%%%%%%%%%%%%%%%%%
%%%%%%%%%%%%%%%%%%%%%%%%%%%%%%%%%%%%%%%%%%%%%%%%%%%%%%%%%%%%%%%%%%%%%%%%%%%%%%%%%%%%%%%%%%%%%%%%%%%%%%%%%%%

\begin{section}{Farkas' lemma over $\mathbb{Z}$}

We want to obtain a version of Farkas' lemma over $\mathbb{Z}$ similar to Theorem  \ref{Farkas, rational}. 
It is obvious that this theorem does not longer hold over $\mathbb{Z}$ in its full generality. 
So in order to have a version over integers, we need to impose some extra conditions. 
In fact, by examining  Theorem  \ref{Farkas, rational}, one is led to introduce the following condition/definition. 

\begin{definition} \label{Farkas--related}

Vectors  $v_1,...,v_m\in \mathbb{Z}^n$ are said to be Farkas--related if the following condition holds: 
For arbitrary integers $a_1\leq b_1,...,a_m\leq b_m$,
if a vector $w\in  \sum_{i=1}^m \mathbb{Z} v_i$ can be written as 
$w=\sum_{i=1}^m x_i v_i$ for some rational numbers $a_1\leq  x_1\leq b_1,...,a_m\leq x_m\leq b_m$, then 
we have $w=\sum_{i=1}^m y_i v_i$ for some integers $a_1\leq  y_1\leq b_1,...,a_m\leq y_m\leq b_m$.
 
\end{definition}

Using this definition and  Theorem \ref{Farkas, rational},  one can easily derive the following version of Farkas' Lemma over $\mathbb{Z}$.

\begin{theorem}  \label{Farkas, integers, first}

Suppose that  vectors $v_1,...,v_m\in \mathbb{Z}^n$  are  Farkas--related and let  arbitrary integers $a_1\leq b_1,...,a_m\leq b_m$ 
be given.  Then a vector $w\in  \mathbb{Z}^n$ can be written as 
$w=\sum_{i=1}^m x_i v_i$ for some integers $a_1\leq x_1\leq  b_1,...,a_m\leq x_m\leq b_m$ if and only if 
$w\in  \sum_{i=1}^m\mathbb{Z} v_i$ and
$$(u,w)\leq \sum_{i=1}^m a_i\frac{(u,v_i)-|(u,v_i)|}{2}+\sum_{i=1}^m b_i\frac{(u,v_i)+|(u,v_i)|}{2},$$
for   every  $\{v_1,...,v_m\}$--indecomposable point $[u]\in \mathbb{R}\mathbb{P}_+^{n-1} $.

\end{theorem}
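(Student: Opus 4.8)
The plan is to deduce the theorem directly from Theorem \ref{Farkas, rational} together with the definition of Farkas--relatedness, since the latter is tailor-made to close the gap between rational and integral solvability. I would treat the two directions separately, noting at the outset that the inequality condition ranging over the $\{v_1,\dots,v_m\}$--indecomposable points $[u]\in\mathbb{R}\mathbb{P}_+^{n-1}$ is verbatim the same in both theorems, so nothing about that condition needs to be re-derived.

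For the ``only if'' direction, suppose $w=\sum_{i=1}^m y_i v_i$ for some integers $a_i\le y_i\le b_i$. Then $w$ is an integral combination of the $v_i$, so $w\in\sum_{i=1}^m\mathbb{Z}v_i$. Moreover, these integers $y_i$ are in particular rational numbers satisfying $a_i\le y_i\le b_i$, so the hypotheses of the ``only if'' part of Theorem \ref{Farkas, rational} are met; that theorem then yields the displayed inequality at every indecomposable point. This direction therefore requires no work beyond recording these two observations.

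For the ``if'' direction, assume $w\in\sum_{i=1}^m\mathbb{Z}v_i$ and that the inequality holds at every $\{v_1,\dots,v_m\}$--indecomposable point. Since $\sum_{i=1}^m\mathbb{Z}v_i\subseteq\sum_{i=1}^m\mathbb{Q}v_i$, we have in particular $w\in\sum_{i=1}^m\mathbb{Q}v_i$, so the hypotheses of the ``if'' part of Theorem \ref{Farkas, rational} hold. Applying that theorem produces rational numbers $a_i\le x_i\le b_i$ with $w=\sum_{i=1}^m x_i v_i$. At this point I invoke Definition \ref{Farkas--related}: because the $a_i,b_i$ are integers, $w$ lies in $\sum_{i=1}^m\mathbb{Z}v_i$, and $w$ admits a rational representation obeying the bounds, Farkas--relatedness of $v_1,\dots,v_m$ guarantees integers $a_i\le y_i\le b_i$ with $w=\sum_{i=1}^m y_i v_i$, which is exactly the desired integral solution.

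The argument is essentially a two-step chain, and there is no genuine obstacle; the one point deserving care is verifying that the hypothesis of Definition \ref{Farkas--related} is satisfied before applying it. That definition demands $w\in\sum_{i=1}^m\mathbb{Z}v_i$ (membership in the integer lattice, not merely in the rational span), and this is precisely why the theorem is stated with $w\in\sum_{i=1}^m\mathbb{Z}v_i$ rather than the weaker $w\in\sum_{i=1}^m\mathbb{Q}v_i$ that appears in Theorem \ref{Farkas, rational}. Once that membership is in hand, Farkas--relatedness performs the remaining step, upgrading the rational certificate furnished by Theorem \ref{Farkas, rational} to an integral one.
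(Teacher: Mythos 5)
Your proof is correct and follows exactly the route the paper intends: the paper gives no explicit proof, stating only that the theorem is "easily derived" from Theorem \ref{Farkas, rational} and Definition \ref{Farkas--related}, and your two-step chain (rational solvability from the inequalities, then the integral upgrade via Farkas--relatedness) is precisely that derivation, with the correct attention to the hypothesis $w\in\sum_{i=1}^m\mathbb{Z}v_i$.
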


This theorem clarifies the importance of Farkas--related vectors. In this section,   
we discuss a number of characterizations of Farkas--related vectors.

%%%%%%%%%%%%%%%%%%%%%%%%%%%%%%%%%%%%%%%%%%%%%%%%%%%%%%%%%%%%%%%%%%%%%%%%%%%%%%%%%%%%%%%%%%%%%%%%%%%%%%%%%%%
%%%%%%%%%%%%%%%%%%%%%%%%%%%%%%%%%%%%%%%%%%%%%%%%%%%%%%%%%%%%%%%%%%%%%%%%%%%%%%%%%%%%%%%%%%%%%%%%%%%%%%%%%%%

\begin{subsection}{Characterizations of Farkas--related vectors}

We begin with an easy lemma regarding Farkas-related vectors. 

\begin{lemma} \label{Farkas--related, first characterization}

Let $v_1,...,v_m\in \mathbb{Z}^n$ be arbitrary vectors. Then the following conditions are equivalent.\\
(1) The vectors $v_1,...,v_m\in \mathbb{Z}^n$ are  Farkas--related.\\
(2) If a vector $w\in  \sum_{i=1}^m \mathbb{Z} v_i$ can be written as 
 $w=\sum_{i=1}^m x_i v_i$ for some rational numbers $0\leq  x_1,...,x_m<1$, then 
we have $w=\sum_{i=1}^m y_i v_i$ for some numbers $y_1,...,y_m\in\{0,1\}$, having the property
that for every $i=1,...,m$, if $x_i=0$, then $y_i=0$.  \\
(3) If a vector $w\in \sum_{i=1}^m \mathbb{Z} v_i$ can be written as $k w=\sum_{i=1}^m a_i v_i$ for some integers $0\leq a_1,...,a_m< k$  
(where $k$ is an arbitrary natural number), then 
$w$ can be written as $w=\sum_{i=1}^m y_i v_i$ for some numbers $y_1,...,y_m\in\{0,1\}$, having the property
that for every $i=1,...,m$, if $a_i=0$, then  $y_i=0$.  

\end{lemma}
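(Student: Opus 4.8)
The plan is to establish the two equivalences $(1)\Leftrightarrow(2)$ and $(2)\Leftrightarrow(3)$, from which the three-way equivalence follows. The genuinely substantive steps are $(1)\Rightarrow(2)$ and $(2)\Rightarrow(1)$ --- the passage between arbitrary integer bounds $[a_i,b_i]$ and the normalized bounds $[0,1)$ --- whereas $(2)\Leftrightarrow(3)$ is a routine matter of clearing denominators. Throughout I would use that the hypothesis $w\in\sum_{i=1}^m\mathbb{Z} v_i$ is preserved under subtracting integer combinations of the $v_i$, which is what lets me localize the problem to the fractional parts of the coefficients.

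For $(1)\Rightarrow(2)$ I would specialize Definition \ref{Farkas--related}, \emph{splitting the index set according to whether $x_i$ vanishes}. Given $w=\sum x_iv_i$ with $0\le x_i<1$, let $S=\{i:x_i=0\}$ and put $a_i=b_i=0$ for $i\in S$ and $a_i=0,\ b_i=1$ for $i\notin S$, so that $a_i\le x_i\le b_i$ for all $i$. Farkas--relatedness then yields integers $y_i$ with $a_i\le y_i\le b_i$; these satisfy $y_i\in\{0,1\}$, and for $i\in S$ the forced value $y_i=0$ supplies exactly the extra clause ``if $x_i=0$ then $y_i=0$.'' The equivalence $(2)\Leftrightarrow(3)$ is then immediate: given $kw=\sum a_iv_i$ with $0\le a_i<k$ I set $x_i=a_i/k\in[0,1)$ and read off $(3)$ from $(2)$, while conversely rationals $0\le x_i<1$ can be put over a common denominator $k$ as $x_i=a_i/k$ with $0\le a_i<k$, giving $kw=\sum a_iv_i$ and hence $(2)$ from $(3)$; in both directions $x_i=0\iff a_i=0$, so the two vanishing clauses match up.

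The main work, and the step I expect to be the real obstacle, is $(2)\Rightarrow(1)$: reinstating arbitrary integer bounds. Here I would use the \emph{floor decomposition}. Given $w=\sum x_iv_i$ with $a_i\le x_i\le b_i$ rational and $w\in\sum\mathbb{Z} v_i$, write $x_i=n_i+\{x_i\}$ with $n_i=\lfloor x_i\rfloor\in\mathbb{Z}$ and $0\le\{x_i\}<1$, and set $w'=w-\sum n_iv_i=\sum\{x_i\}v_i$. Since $w'\in\sum\mathbb{Z} v_i$, condition $(2)$ produces $y_i'\in\{0,1\}$ with $w'=\sum y_i'v_i$ and $\{x_i\}=0\Rightarrow y_i'=0$; setting $y_i=n_i+y_i'$ gives $w=\sum y_iv_i$. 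It remains to verify $a_i\le y_i\le b_i$, and this is exactly where the extra clause of $(2)$ earns its keep. Since $a_i$ is an integer with $a_i\le x_i$, we have $n_i\ge a_i$ and hence $y_i\ge a_i$. For the upper bound: if $\{x_i\}>0$ then $n_i<x_i\le b_i$ forces $n_i\le b_i-1$, so $y_i\le n_i+1\le b_i$; and if $\{x_i\}=0$ then the clause gives $y_i'=0$, so $y_i=n_i=x_i\le b_i$ with no overshoot. Without the vanishing clause the case $x_i=n_i=b_i$ could yield $y_i=b_i+1$ and break the bound, so ruling out this overshoot is the subtle point on which the argument turns. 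This closes both equivalences and proves the lemma.
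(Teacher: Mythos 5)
Your proposal is correct and follows essentially the same route as the paper: the same specialization $a_i=b_i=0$ versus $a_i=0,\,b_i=1$ for $(1)\Rightarrow(2)$, the same denominator-clearing for $(2)\Leftrightarrow(3)$, and the same floor decomposition $w=\sum([x_i]+y_i')v_i$ for $(2)\Rightarrow(1)$, including the observation that the vanishing clause is exactly what prevents the overshoot $y_i=b_i+1$ when $x_i=[x_i]=b_i$. No gaps.
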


\begin{proof}

First we prove that (1) implies (2). Suppose that   a vector $w\in  \sum_{i=1}^m \mathbb{Z} v_i$ can be written as 
 $w=\sum_{i=1}^m x_i v_i$ for some rational numbers $0\leq  x_1,...,x_m<1$. Set 
$a_i=b_i=0$ if $x_i=0$ and $a_i=0,b_i=1$ if $x_i\neq 0$. Since the vectors $v_1,...,v_m$ are Farkas--related,
 there exist integers $a_1\leq y_1\leq b_1,...,a_m\leq y_m\leq b_m$ such that  $w=\sum_{i=1}^m y_i v_i$.
It is clear that the numbers $y_1,...,y_m$ satisfy the condition in (2) and we are done.

It is easy to see that (2) and (3) are equivalent, so it remain to prove that (2) implies (1).   
To show this direction, suppose that  for arbitrary integers $a_1\leq b_1,...,a_m\leq b_m$,
a vector $w\in   \sum_{i=1}^m \mathbb{Z} v_i$ can be written as 
 $w=\sum_{i=1}^m x_i v_i$ for some rational numbers $a_1\leq  x_1\leq b_1,...,a_m\leq x_m\leq b_m$.
Then we have 
 $$w-\sum_{i=1}^m[x_i]v_i=\sum_{i=1}^m (x_i-[x_i]) v_i.$$
Since   $0\leq x_i-[x_i]<1$, for $i=1,...,m$, there are integers $y_1,...,y_m\in\{0,1\}$, having the property
that for every $i=1,...,m$, if $x_i-[x_i]=0$, then $y_i=0$, such that 
$w-\sum_{i=1}^m[x_i]v_i=\sum_{i=1}^m y_i  v_i$. Then we have, 
$w=\sum_{i=1}^m([x_i]+y_i) v_i$.
If $[x_i]=b_i$ for some $i\in \{1,...,m\}$, then we have $x_i-[x_i]=0$, which implies that
$y_i=0$ and therefore $a_i\leq [x_i]+y_i\leq b_i$. If $[x_i]<b_i$ for  some $i=1,...,m$, then clearly, we have $a_i\leq [x_i]+y_i\leq b_i$.
So, the presentation $w=\sum_{i=1}^m([x_i]+y_i) v_i$ is the desired one and we are done.   

\end{proof}

Now, we present a useful criterion to check if some given vectors are Farkas-related. Let us introduce some definitions. 
The support of a vector  $v=(x_1,...,x_n)\in \mathbb{Q}^n$ is defined to be $supp(x)=\{i|x_i\neq 0\}$.  
A nonzero vector $v$ in a vector subspace $V$ of $\mathbb{Q}^n$ is called an elementary vector of $V$, 
if $supp(v)$ is minimal with respect to inclusion, in the set $\{supp(w)|0\neq w\in V\}$. An elementary vector
$v=(x_1,...,x_n)$ is called an elementary integral vector if $\{x_i|x_i\neq 0\}$ are relatively prime integers. Note that  
for every nonzero vector $v$ of $V$, there is an elementary integral vector $w$ of $V$ with $supp(w)\subset supp(v)$, see \cite{RO,Vi}.
Given vectors   $v_1,...,v_m\in \mathbb{Q}^n$, we say that a relation $\sum_{i=1}^m a_i v_i=0$ (where $a_1,...,a_m\in \mathbb{Q}$) 
is an elementary (integral) relation if the vector  $(a_1,...,a_m)$ is an elementary (integral) vector of  the vector space 
$\{(x_1,...,x_m)|\sum_{i=1}^m x_i v_i=0\}$. Using this terminology, we can present a useful criterion. 

\begin{proposition} \label{Farkas--related, second characterization}

Let $v_1,...,v_m\in \mathbb{Z}^n$ be arbitrary vectors.  Then the vectors $v_1,...,v_m$ are Farkas--related
if and only if for every elementary integral relation $\sum_{i=1}^m a_i v_i=0$, we have
$a_1,...,a_m\in\{-1,0,1\}$. 

\end{proposition}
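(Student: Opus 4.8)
The plan is to prove both implications through the equivalent condition (2) of Lemma \ref{Farkas--related, first characterization}, working throughout with the rational kernel $V=\{(t_1,\dots,t_m)\in\mathbb{Q}^m:\sum_i t_i v_i=0\}$; the elementary integral relations are exactly the primitive integral elementary vectors of $V$. The workhorse for the hard direction will be the classical conformal (sign-compatible) decomposition of a vector of $V$ into elementary vectors of $V$, as in \cite{RO,Vi}: every $g\in V$ can be written $g=\sum_k c_k e_k$ with $c_k>0$ and each $e_k$ an elementary vector of $V$ satisfying $e_{k,i}g_i\ge 0$ for all $i$; scaling $e_k$ to be primitive integral makes it $\{-1,0,1\}$-valued under the hypothesis. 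In particular, for each $i_0\in\operatorname{supp}(g)$ there is an elementary integral vector of $V$ conformal to $g$ and nonzero at $i_0$.

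\textbf{Forward direction (contrapositive).} Suppose some elementary integral relation $\sum_i a_i v_i=0$ has an entry with $|a_j|\ge 2$. After reindexing and possibly negating $a$ I may assume $a_1\ge 2$; let $S=\operatorname{supp}(a)$. I would first record that the minimality of $\operatorname{supp}(a)$ forces the space of kernel vectors supported in $S$ to be exactly $\mathbb{Q}a$, since any second independent such vector could be combined with $a$ to kill a coordinate of $S$ and produce a smaller support. Then I take $w=v_1$ with the rational representation $x_1=0$ and $x_i=-a_i/a_1$ for $i\ne 1$ (so $x_i=0$ off $S$), which is valid because $a_1 v_1=-\sum_{i\ge 2}a_i v_i$. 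Since the nonzero $a_i$ are relatively prime and $a_1\ge 2$, some $x_i\notin\mathbb{Z}$. I impose the integer bounds $\alpha_i=\beta_i=0$ for $i\notin S$ and for $i=1$, and $\alpha_i=\lfloor x_i\rfloor,\ \beta_i=\lceil x_i\rceil$ for $i\in S\setminus\{1\}$. Pinning the coordinates off $S$ to $0$ makes any solution $y$ in the box satisfy $y-x\in\mathbb{Q}a$, whence $y_1=x_1+t a_1=t a_1$; the constraint $y_1=0$ forces $t=0$, so $x$ is the only solution in the box and it is non-integral. This produces a $w\in\sum_i\mathbb{Z}v_i$ with a rational but no integral representation in the box, contradicting Definition \ref{Farkas--related}.

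\textbf{Reverse direction.} Assuming all elementary integral relations are $\{-1,0,1\}$-valued, I would verify condition (2). Given $w=\sum_i x_i v_i$ with $0\le x_i<1$ rational and $w\in\sum_i\mathbb{Z}v_i$, it suffices to prove a rounding statement: there is an integer vector $y$ with $y-x\in V$ and $\lfloor x_i\rfloor\le y_i\le\lceil x_i\rceil$ for all $i$ (for $x\in[0,1)^m$ this yields exactly $y\in\{0,1\}^m$ with $x_i=0\Rightarrow y_i=0$). Starting from any integer solution $z$ (which exists as $w$ lies in the lattice), I minimize the potential $\Phi(z)=\sum_i\operatorname{dist}\big(z_i,[\lfloor x_i\rfloor,\lceil x_i\rceil]\big)$ over integer solutions. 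If $\Phi>0$ at the minimizer, I pick a coordinate $i_0$ with $z_{i_0}$ outside its interval; by the conformal decomposition of $g=z-x\in V$, some elementary integral vector $e$ is conformal to $g$ and nonzero at $i_0$, hence $e\in\{-1,0,1\}^m$ with $e_{i_0}=\operatorname{sign}(g_{i_0})$. Replacing $z$ by $z-e$ keeps it an integer solution (as $e$ is in the integer kernel) and, by conformality, moves every coordinate weakly toward its interval and $i_0$ strictly into it, so $\Phi$ strictly decreases, a contradiction. Hence $\Phi=0$ and the desired $y$ exists.

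\textbf{Main obstacle.} The delicate part is the reverse direction: invoking the conformal decomposition in precisely the form that, for each $i_0\in\operatorname{supp}(g)$, supplies an elementary vector conformal to $g$ and nonzero there, and then checking the monotonicity of $\Phi$ coordinate by coordinate when subtracting $e$, including the boundary cases where $x_i\in\mathbb{Z}$ or where $z_i$ already sits on an endpoint of its interval. The forward direction is comparatively routine once support-minimality of $a$ is used to collapse the solution set to a line.
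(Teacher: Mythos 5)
Your proof is correct, and the two directions relate to the paper's argument differently. The forward direction is essentially the paper's own proof in contrapositive form: the paper writes $v_1=\sum_{i\ge 2}(-a_i/a_1)v_i$, pins $y_1$ to $0$ and boxes the remaining coordinates between $\lfloor -a_i/a_1\rfloor$ and $\lfloor -a_i/a_1\rfloor+1$, and uses the $\mathbb{Z}$-linear independence of $v_2,\dots,v_r$ (forced by support-minimality of $a$) to conclude uniqueness of the coefficients; your version replaces that independence argument by the observation that the kernel vectors supported in $\operatorname{supp}(a)$ form the line $\mathbb{Q}a$, which is the same fact in different clothing. The reverse direction, however, is genuinely different. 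The paper gives a self-contained double induction: a preliminary lemma that $kw\in\sum_{i\in I}\mathbb{Z}v_i$ forces $w\in\sum_{i\in I}\mathbb{Z}v_i$, followed by an induction on $m$ in which one shifts the coefficient vector $b$ along a single $\{-1,0,1\}$-valued elementary relation by $l a$ until some coordinate hits $0$ or $k$, peels those coordinates off, and recurses. You instead invoke Rockafellar's full conformal decomposition of an arbitrary kernel vector into sign-compatible elementary vectors and run a potential-function descent on integer solutions $z$, subtracting one $\{-1,0,1\}$-valued elementary vector at a time; since your potential $\Phi$ is a nonnegative integer, the minimum is attained and the descent terminates. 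Your route is shorter and conceptually cleaner, at the price of importing a stronger classical input (the conformal decomposition theorem, which is the main result of the Rockafellar reference the paper cites only for the weaker support-containment fact); the paper's route is longer but needs only that weaker fact. Two small points you should make explicit if you write this up: the minimum of $\Phi$ exists because $\Phi$ is integer-valued and bounded below (the set of integer solutions is an infinite coset), and at $i_0$ the step need not land inside the interval, only strictly decrease the distance, which is all the argument requires.
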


\begin{proof}

First we prove the "only if" direction. So suppose that the vectors $v_1,...,v_m$ are Farkas--related 
and let $\sum_{i=1}^m a_i v_i=0$ be an elementary relation. Without loss of generality, we may assume that 
$\{i|a_i\neq 0\}=\{1,...,r\}$. 
We can write
$ v_1=\sum_{i=2}^r \frac{-a_i}{a_1} v_i$. Since, obviously, we have  $v_1\in \sum_{i=1}^m \mathbb{Z} v_i$,
we conclude that there are integers 
$$0\leq y_1\leq 0,[\frac{-a_2}{a_1}]\leq y_{2}\leq  [\frac{-a_2}{a_1}]+1,...,[\frac{-a_r}{a_1}]\leq y_{r}\leq  [\frac{-a_r}{a_1}]+1,$$ 
such that $v_1=\sum_{i= 1}^r y_i v_i$. Since $\sum_{i=1}^m a_i v_i=0$ is an elementary relation, we easily see that 
the vectors $v_2,...,v_r$ are  $\mathbb{Z}$--linearly independent.  Therefore 
we must have $\frac{-a_i}{a_1}=y_i\in \mathbb{Z}$, for $i=2,...,r$, i.e. $a_1$ divides all the numbers 
$a_2,...,a_r$. A similar argument shows that each $a_i$ ($2\leq i\leq r$) divides 
all the numbers $a_1,...,a_r$. Since the numbers $a_1,...,a_r$ are relatively prime, we conclude that 
$a_1,...,a_r\in \{-1,1\}$  and the proof of this direction is complete.  
 
To prove the converse,  suppose that given vectors $v_1,...,v_m\in \mathbb{Z}^n$ satisfy  the condition. 
First, we show that 
if $k w\in  \sum_{i\in I} \mathbb{Z} v_i$ for some vector $w\in   \sum_{i=1}^m \mathbb{Z} v_i$, some 
nonempty set $I\subset \{1,...,m\}$ and 
some nonzero integer $k$, then we have $w\in  \sum_{i\in I} \mathbb{Z} v_i$. To show this,  
we use induction on $m-|I|$. There is nothing to prove in the base case, i.e. $m-|I|=0$. 
To prove the inductive step, without loss of generality, we may assume that $m\notin I$. Then, by induction,we have
$w\in  \sum_{i\in I\cup \{m\}} \mathbb{Z} v_i$, i.e.
$w=\sum_{i\in I\cup \{m\}} b_i v_i$, for some integers $b_i$ ($i\in I\cup \{m\}$). If $b_m= 0$, then
we are done. So suppose that  $b_m\neq 0$. Then we have $k b_m v_m\in \sum_{i\in I} \mathbb{Z} v_i$. It follows that, 
there is a nonempty set $J\subset I$, such that the vectors $\{v_j\}_{j\in J}$, are $\mathbb{Z}$--linearly independent,
and  $k b_m v_m\in  \sum_{i\in J} \mathbb{Z} v_i$. So, we must have 
$\sum_{i\in J\cup \{m\}} a_i v_i=0$, for some integers $a_i\in\{-1,0,1\}$, not all equal to zero. Since  
the vectors $\{v_j\}_{j\in J}$, are $\mathbb{Z}$--linearly independent, we have $a_m\neq 0$. 
Since $a_m\in  \{-1,1\}$, we conclude that $v_m\in \sum_{i\in I} \mathbb{Z} v_i$ and therefore, $w\in \sum_{i\in I} \mathbb{Z} v_i$.

To show that the vectors $v_1,...,v_m$ are  Farkas--related, 
we use Lemma \ref{Farkas--related, first characterization}, part (3).
So let $k w=\sum_{i=1}^m b_i v_i$ 
for some vector $w\in \sum_{i=1 }^m \mathbb{Z} v_i$, and some integers $0\leq b_1,...,b_m<k$, where $k$ is a natural number.
We need to prove that   $w=\sum_{i=1}^m y_i v_i$ for some numbers  $y_1,...,y_m\in \{0,1\}$, having the property
that for every $i=1,...,m$, if $b_i=0$, then $y_i=0$.  
To do so, we use induction on $m$. First suppose that $m=1$. Since $w\in \mathbb{Z} v_1$, we have $w=l v_1$ for some integer $l$. 
Then we have $k l v_1=k w=a_1 v_1$. Since $0\leq a_1<k$, this identity is possible, only if $w=0$, in which case we have $w=0 \times v_1$, and 
we are done.

Now we prove the inductive step. If the vectors $v_1,...,v_m$ are  $\mathbb{Z}$-linearly independent, then from 
$w=\sum_{i=1}^m \frac{b_i}{k} v_i$ and the facts that  $w\in  \sum_{i= 1}^m \mathbb{Z} v_i$ and $0\leq b_i <k$ for $i=1,...,m$, 
we conclude that $b_1=\cdots=b_m=0$ and therefore $w=0$ and we are done.

If some $b_i$, say $b_1$, is zero, then from $kw=\sum_{i=2}^m b_i v_i$, we conclude that $w\in \sum_{i= 2}^m \mathbb{Z} v_i$. 
Clearly, the vectors $v_2,...,v_m$ satisfy the condition of the proposition and therefore by induction, we have 
$w=\sum_{i=2}^m y_i v_i$ for some numbers  $y_2,...,y_m\in \{0,1\}$, having the property
that for every $i=2,...,m$, if $b_i=0$, then $y_i=0$.  Then the numbers $y_1=0,y_2,...,y_m$ give us the desired presentation.

So we may suppose that $v_1,...,v_m$ are  $\mathbb{Z}$-linearly dependent and none of the numbers $b_1,...,b_m$ are zero. 
Then there exists an  elementary integral relation $\sum_{i=1}^m a_i v_i=0$. 
Since $a_1,...,a_m\in \{-1,0,1\}$, it is then easy to see that 
we can choose an integer $ l>0$, large enough, such that for each $i=1,...,m$, we have $0\leq b_i+l a_i\leq k$ and at least of  the numbers 
$b_1+la_1,...,b_{m}+l a_{m}$ is equal to $0$ or $k$.  
Without loss of generality, we may assume that 
$$0=b_1+l a_1=\cdots=b_r+l a_r,$$
$$0 <b_{r+1}+l a_{r+1},...,b_{s}+l a_{s}<k,$$
$$b_{s+1}+l a_{s+1}=\cdots= b_{m}+l a_{m}=k.$$ 
So, we can write
\begin{equation}\label{w}
k(w-v_{s+1}-\cdots-v_m)=(b_{r+1}+l a_{r+1})v_{r+1}+\cdots+(b_{s}+l a_{s})v_{s}.
\end{equation}
Clearly, we have $0\leq s-r < m$. If $s-r=0$, then by Equality \ref{w}, we have $w=\sum_{i=s+1}^m v_{i}$, and we are done.
So suppose that $s-r>0$. Clearly, the vectors $v_{r+1},...,v_s$ satisfy the condition of the proposition. We have seen that, 
Equality \ref{w}, implies that $w-\sum_{i=s+1}^m v_{i}\in \sum_{i=r+1}^s\mathbb{Z} v_i$, because  $w\in \sum_{i=1}^m\mathbb{Z} v_i$.
Therefore, by induction, we have $w-\sum_{i=s+1}^m v_{i}=\sum_{i=1}^{s-r} c_{i}v_{r+i}$ for some integers 
$c_1,...,c_{s-r}\in\{0,1\}$. So, we have $w=\sum_{i=s+1}^m v_{i}+\sum_{i=1}^{s-r} c_{i}v_{r+i}$ and hence the proof is complete.   

\end{proof}

An immediate consequence of the above proposition is that if distinct vectors $v_1,...,v_m\in\mathbb{Z}^n\setminus\{0\}$ are Farkas--related vectors, then $m< 3^n$.  It would be interesting to 
determine the maximum number of distinct nonzero Farkas--related vectors in $\mathbb{Z}^n$ (and possibly classify such "maximal" sets of vectors). 

In the end of this section, we  introduce a construction, producing new Farkas-related vectors from a given set of Farkas-related vectors. 
Let us call an integral matrix, a \textit{Farkas matrix} if its columns are Farkas--related vectors. 
Our construction in terms of matrices, is the following.

\begin{proposition}\label{construction}

Let $A,B$ be two $n\times m$ integral matrices, $C$ be an invertible $m\times m$ integral matrix, and $D$ be  an $m\times m$ matrix, having at most one nonzero entry, equal to 
$1$ or $-1$, in each row. Then the matrix
$$ E=\begin{pmatrix}
A & B\\
CD & C
\end{pmatrix} $$
is a Farkas matrix if and only if the matrix $A-B D$ is a Farkas matrix.

\end{proposition}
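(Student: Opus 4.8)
The plan is to reduce everything to the characterization of Farkas--related vectors by their elementary integral relations (Proposition \ref{Farkas--related, second characterization}). Write $a_1,\dots,a_m$ for the columns of $A$ and $b_1,\dots,b_m$ for those of $B$, and let $e_j$ denote the $j$--th standard basis vector of $\mathbb{Q}^m$. The columns of $E$ are then the vectors $p_j=\begin{pmatrix}a_j\\ CDe_j\end{pmatrix}$ and $q_j=\begin{pmatrix}b_j\\ Ce_j\end{pmatrix}$ for $j=1,\dots,m$. A rational relation $\sum_j x_j p_j+\sum_j y_j q_j=0$ is equivalent, reading off the top and bottom blocks, to the pair $Ax+By=0$ and $C(Dx+y)=0$. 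Because $C$ is invertible, the second equation forces $y=-Dx$, and substituting into the first gives $(A-BD)x=0$. Hence the space of rational relations among the columns of $E$ is precisely the image of the linear map $\phi\colon x\mapsto(x,-Dx)$ applied to the space of rational relations among the columns of $A-BD$.

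Next I would show that $\phi$ is an isomorphism of relation spaces respecting the combinatorial data used by Proposition \ref{Farkas--related, second characterization}. The map is linear and injective (one recovers $x$ from the first $m$ coordinates) and, by the previous paragraph, surjects onto the relation space of $E$. The crucial point is that $\phi$ preserves supports in an order--preserving way: since $D$ has at most one nonzero entry, equal to $\pm1$, in each row, the $j$--th coordinate of $Dx$ is either $0$ or $\pm x_{\sigma(j)}$, where $\sigma(j)$ is the unique column in which row $j$ is nonzero. Thus the second block of $\phi(x)$ has support $\{\,m+j:\sigma(j)\in\operatorname{supp}(x)\,\}$, a monotone function of $\operatorname{supp}(x)$, while the first block has support $\operatorname{supp}(x)$ itself. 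Combining these gives $\operatorname{supp}(\phi(x))\subseteq\operatorname{supp}(\phi(x'))$ if and only if $\operatorname{supp}(x)\subseteq\operatorname{supp}(x')$, so $\phi$ carries elementary vectors to elementary vectors and conversely.

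Finally I would track integrality and the $\{-1,0,1\}$ condition. Every nonzero entry of the second block of $\phi(x)$ equals $\pm x_k$ for some nonzero $x_k$, so the set of absolute values of the nonzero entries of $\phi(x)$ coincides with that of $x$; in particular $\phi(x)$ is integral with coprime nonzero entries exactly when $x$ is, and its entries lie in $\{-1,0,1\}$ exactly when those of $x$ do. Together with the support statement, $\phi$ restricts to a bijection between the elementary integral relations of $A-BD$ and those of $E$ under which the property ``all coefficients lie in $\{-1,0,1\}$'' is preserved in both directions. By Proposition \ref{Farkas--related, second characterization}, $E$ is a Farkas matrix exactly when every elementary integral relation among its columns has coefficients in $\{-1,0,1\}$, and the bijection shows this holds if and only if the same is true for $A-BD$, i.e. if and only if $A-BD$ is a Farkas matrix.

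The main obstacle is the middle step: checking that $\phi$ is an order--isomorphism on supports and preserves the set of nonzero absolute values. This is exactly where the two structural hypotheses enter and cannot be weakened. Invertibility of $C$ is what lets one solve $y=-Dx$ and thereby eliminate $B$, and the restriction that each row of $D$ carries at most one entry $\pm1$ is what guarantees that each coordinate of $Dx$ is a single signed coordinate of $x$, so that supports, coprimality, and the $\{-1,0,1\}$ bookkeeping all transfer cleanly. For a general integral $D$ the coordinates of $Dx$ would be genuine combinations of the $x_k$, and both the support--preservation and the coefficient bounds would fail.
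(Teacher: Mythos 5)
Your proposal is correct and follows essentially the same route as the paper: eliminate the second block via invertibility of $C$ to identify the null space of $E$ with that of $A-BD$ through $x\mapsto(x,-Dx)$, and then invoke Proposition \ref{Farkas--related, second characterization}. The only difference is that you spell out in detail the support-preservation and $\{-1,0,1\}$ bookkeeping that the paper dismisses as ``easily verifiable facts,'' which is a welcome elaboration rather than a new idea.
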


\begin{proof}

Let $EX=0$, where 
$X=\begin{pmatrix}
x_1\\
\vdots\\
x_{2 m}
\end{pmatrix} $ is a vector in   the null space of $E$. Setting 
$Y=\begin{pmatrix}
x_1\\
\vdots\\
x_{m}
\end{pmatrix} $ and $Z=\begin{pmatrix}
x_{m+1}\\
\vdots\\
x_{2 m}
\end{pmatrix} $, one can easily see that $EX=0$ if and only if $A Y+B Z=0, C D Y+CZ=0$. Since $C$ is invertible, 
 these equations are equivalent to the equations $(A-B D)Y=0, Z=-D Y$. Now the proof can be completed by using 
Proposition \ref{Farkas--related, second characterization} and the following 
easily verifiable facts.  We have $x_i\in\{-1,0,1\}$ ($1\leq i \leq 2 m$), if and only if  $x_i\in\{-1,0,1\}$ ($1\leq i \leq m$). 
The vector $X$ is an elementary integral vector in the null space of $E$ if and only if the vector $Y$ is 
 an elementary integral vector in the null space of $A-B D$. 

\end{proof}

\end{subsection}

%%%%%%%%%%%%%%%%%%%%%%%%%%%%%%%%%%%%%%%%%%%%%%%%%%%%%%%%%%%%%%%%%%%%%%%%%%%%%%%%%%%%%%%%%%%%%%%%%%%%%%%%%%%
%%%%%%%%%%%%%%%%%%%%%%%%%%%%%%%%%%%%%%%%%%%%%%%%%%%%%%%%%%%%%%%%%%%%%%%%%%%%%%%%%%%%%%%%%%%%%%%%%%%%%%%%%%%

\end{section}

%%%%%%%%%%%%%%%%%%%%%%%%%%%%%%%%%%%%%%%%%%%%%%%%%%%%%%%%%%%%%%%%%%%%%%%%%%%%%%%%%%%%%%%%%%%%%%%%%%%%%%%%%%%
%%%%%%%%%%%%%%%%%%%%%%%%%%%%%%%%%%%%%%%%%%%%%%%%%%%%%%%%%%%%%%%%%%%%%%%%%%%%%%%%%%%%%%%%%%%%%%%%%%%%%%%%%%%
%%%%%%%%%%%%%%%%%%%%%%%%%%%%%%%%%%%%%%%%%%%%%%%%%%%%%%%%%%%%%%%%%%%%%%%%%%%%%%%%%%%%%%%%%%%%%%%%%%%%%%%%%%%
%%%%%%%%%%%%%%%%%%%%%%%%%%%%%%%%%%%%%%%%%%%%%%%%%%%%%%%%%%%%%%%%%%%%%%%%%%%%%%%%%%%%%%%%%%%%%%%%%%%%%%%%%%%
%%%%%%%%%%%%%%%%%%%%%%%%%%%%%%%%%%%%%%%%%%%%%%%%%%%%%%%%%%%%%%%%%%%%%%%%%%%%%%%%%%%%%%%%%%%%%%%%%%%%%%%%%%%
%%%%%%%%%%%%%%%%%%%%%%%%%%%%%%%%%%%%%%%%%%%%%%%%%%%%%%%%%%%%%%%%%%%%%%%%%%%%%%%%%%%%%%%%%%%%%%%%%%%%%%%%%%%
%%%%%%%%%%%%%%%%%%%%%%%%%%%%%%%%%%%%%%%%%%%%%%%%%%%%%%%%%%%%%%%%%%%%%%%%%%%%%%%%%%%%%%%%%%%%%%%%%%%%%%%%%%%
%%%%%%%%%%%%%%%%%%%%%%%%%%%%%%%%%%%%%%%%%%%%%%%%%%%%%%%%%%%%%%%%%%%%%%%%%%%%%%%%%%%%%%%%%%%%%%%%%%%%%%%%%%%

\begin{section}{Farkas-related vectors in Graph Theory}

In this section a class of examples on Farkas--related vectors appearing in Graph Theory, is presented.  
We follow the terminology of \cite{BM}, except that here, the word graph means simple graph. 

\begin{subsection}{Incidence matrices of graphs} 

The question, considered in this part, is the following: When are the columns of the incidence matrix of a graph Farkas--related?
Let us introduce some notations. Suppose that $G$ is a graph with $V(G)=\{1,...,n\}$ and  $E(G)=\{e_1,...,e_m\}$.  
Let $M=M(G)$  denote the incidence matrix 
of $G$, i.e. $M$ is an $n\times m$ matrix with $M_{i j}=1$ if the vertex $i$ is an end of the edge $e_j$ and $M_{i j}=0$, otherwise.  
Furthermore, let $f_1,...,f_n$ denote the standard  basis of $\mathbb{Z}^n$. 
It is clear that the column of $M(G)$ corresponding to the edge $e=i j$ is  the vector $v(e)=v_G(e)=f_{i}+f_{j}$.
An indecomposable point with respect to  the columns of $M(G)$ is, for simplicity, called a $G$-indecomposable point. 
First we characterize the $G$-indecomposable points for a given graph $G$.

\begin{lemma}\label{G-indecomposable}

Suppose that $G$ is a connected graph with $V(G)=\{1,...,n\}$. Then we have the following.\\
(I) If  $G$ is a bipartite graph with bipartition $(\{1,...,r\},\{r+1,...,n\})$, then the set of the $G$-indecomposable points consists of the points
$$[\pm u_{I,J}]=[ \pm \frac{|I|+|J|}{n}(-\sum_{i=1}^r f_i+\sum_{i=r+1}^{n} f_{i})+\sum_{i\in I} f_i-\sum_{j\in J} f_j],$$
where $I\subset \{1,...,r\}$ and  $J\subset \{r+1,...,n\}$
are two sets such that the induced subgraphs of $G$ on $I\cup J$ and on $(\{1,...r\}\setminus I)\cup (\{r+1,...,n\}\setminus J)$, are connected. \\
(II) If $G$ is not a bipartite graph, then the set of the $G$-indecomposable points consists of the points 
$[\sum_{i\in I} f_i-\sum_{j\in J} f_j]$,   where $I, J\subset \{1,...,n\}$  are disjoint sets such that 
the subgraph of $G$ whose set  of vertices is $I\cup J$ and set of edges  is $\{i j\in E(G)| i\in I, j\in J\}$, is a connected graph and
no connected component  of the subgraph $G-(I\cup J)$ is a bipartite graph.   

\end{lemma}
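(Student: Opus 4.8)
The plan is to reduce everything to Lemma~\ref{indecomposable, characterization}(1): a vector $u$ in the span $\sum_e \mathbb{R}\, v(e)$ is $G$-indecomposable if and only if $\dim_{\mathbb{R}} V(I_u)=1$. Thus the whole proof is a computation of $\dim V(I_u)$ in terms of the combinatorics of $G$ and the sign pattern of $u$. For $u=(u_1,\dots,u_n)$ one has $(u,v(e))=u_i+u_j$ for $e=ij$, so the set of ``tight'' edges is $E_0(u)=\{ij\in E(G): u_i+u_j=0\}$, and $V(I_u)=\{w\in\sum_e\mathbb{R}\,v(e): w_i+w_j=0 \text{ for all } ij\in E_0(u)\}$. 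First I would identify the ambient span: using the standard fact that the incidence matrix of a connected graph has rank $n$ when $G$ is non-bipartite and rank $n-1$ when $G$ is bipartite, one gets $\sum_e\mathbb{R}\,v(e)=\mathbb{R}^n$ in case (II) and $\sum_e\mathbb{R}\,v(e)=g^{\bot}$ in case (I), where $g=-\sum_{i=1}^r f_i+\sum_{i=r+1}^n f_i$. The coefficient $\frac{|I|+|J|}{n}$ in the statement is forced precisely by the requirement $u_{I,J}\in g^{\bot}$, which I would verify by a direct summation.

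The engine of both cases is a dimension count for the unconstrained space $W(u)=\{w\in\mathbb{R}^n: w_i+w_j=0 \text{ on } E_0(u)\}$. Decomposing over the connected components of the auxiliary graph $H=(V,E_0(u))$, each bipartite component (including isolated vertices) contributes one degree of freedom, whereas a component containing an odd cycle forces $w=0$ on it; hence $\dim W(u)$ equals the number of bipartite components of $H$. In case (II) the span is all of $\mathbb{R}^n$, so $V(I_u)=W(u)$, and $u$ is indecomposable iff $H$ has exactly one bipartite component and every other component is non-bipartite. Reading off the sign pattern of $u$ on that unique bipartite component (values $\pm c$ on its two sides, $0$ elsewhere) and normalizing gives $u=\sum_{i\in I}f_i-\sum_{j\in J}f_j$. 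The decisive observation is that inside that component the tight edges are exactly the $I$--$J$ edges, so ``the bipartite component is connected'' translates verbatim into ``the subgraph on $I\cup J$ with edge set $\{ij: i\in I, j\in J\}$ is connected'', while on $V\setminus(I\cup J)$ every $G$-edge is automatically tight (both endpoints carry value $0$), so ``the other components are non-bipartite'' becomes ``no component of $G-(I\cup J)$ is bipartite''. I would prove both directions in parallel using this dictionary.

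Case (I) is where the main obstacle lies, because the span now imposes the extra constraint $w\in g^{\bot}$ and one must see that it lowers the dimension by \emph{exactly} one. Since $G$ is bipartite, every subgraph of it is bipartite, so $\dim W(u)$ is simply the number $c(H)$ of connected components of $H$. The key point is that the linear functional $(g,\cdot)$ is nonzero on each component generator (it evaluates to $-|K|$ on the generator supported on a component $K$), hence nonzero on all of $W(u)$, giving $\dim V(I_u)=c(H)-1$. Therefore $u$ is indecomposable iff $H$ has exactly two components $A,B$. Writing $X=\{1,\dots,r\}$, $Y=\{r+1,\dots,n\}$ and $I=A\cap X$, $J=A\cap Y$, the same ``all internal edges are tight'' observation shows $H|_A=G|_A$ and $H|_B=G|_B$, so the two connectivity hypotheses in the statement are exactly ``$A$ and $B$ are connected''.

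It remains to recover the exact vector. Writing $u=c_A(\sum_{i\in I}f_i-\sum_{j\in J}f_j)+c_B(\sum_{i\in X\setminus I}f_i-\sum_{j\in Y\setminus J}f_j)$, the single span relation $c_A(|I|+|J|)+c_B(n-|I|-|J|)=0$ pins down the ratio $c_A:c_B$ and recovers $u=\pm u_{I,J}$ with the stated coefficient $\frac{|I|+|J|}{n}$. The technical care needed throughout case (I) is to confirm that no tight edge runs between $A$ and $B$; equivalently $c_A\neq c_B$, which I would deduce from the facts that $c_A,c_B\neq 0$ (otherwise the relation forces $u=0$) and that the relation makes $c_A$ and $c_B$ have opposite signs. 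This guarantees that $A$ and $B$ are genuinely distinct components of $H$ rather than artifacts of the sign pattern, closing the equivalence in both directions.
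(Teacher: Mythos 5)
Your proposal is correct and follows essentially the same route as the paper's proof: both reduce to the one-dimensionality criterion of Lemma~\ref{indecomposable, characterization}, analyze the subgraph of tight edges through its bipartite versus odd-cycle components to count $\dim V(I_u)$, and use the rank of the incidence matrix ($n$ versus $n-1$) to identify the ambient span and pin down the coefficient $\frac{|I|+|J|}{n}$. Your explicit check that $c_A\neq c_B$ (so no tight edge joins the two components in the bipartite case) is a detail the paper leaves implicit, but it is the same argument.
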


\begin{proof}

According to Lemma \ref{indecomposable, characterization}, we need to find all sets $K\subset E(G)$ for which the vector space
$$V(K)=\{w\in \sum_{e\in E(G)} \mathbb{R} v(e)| (w,v(e))=0\quad \text{for all}\quad e\in K\},$$
is one dimensional. Let  $H$ be the subgraph of $G$ with $V(H)=V(G)$ and $E(H)= K$. Suppose that a vector $w=\sum_{i=1}^n a_i f_i\in    \mathbb{R}^n$ belongs to $V(K)$.
It is easy to verify the following. If there is a path of even (odd) length from vertex $i$ to vertex $j$ in $H$, then we have $a_i=a_j$  ($a_i=-a_j$). In
particular, if there is a cycle of odd length, containing a vertex $i$ in $G$, then we have $a_i=0$.

Now, let $G_1,...,G_k$
be the connected components of $H$, containing a cycle of odd length and let $H_1,...,H_l$
be the connected components of $H$, containing no cycles of odd length. It follows that 
the graphs $H_1,...,H_l$ are bipartite graphs. Denote the set of vertices of 
$G_i$ by $I_i$, and choose a bipartition   $(J_i,K_i)$ for $H_i$. From the above discussion, it is easy to see that  
the vector space  $V(K)$ consists of all vectors 
$$w=\sum_{i=1}^l (r_i (\sum_{j\in J_i} f_j-\sum_{j\in K_i} f_j)),$$ 
where $r_1,...,r_l\in\mathbb{R}$ are arbitrary real numbers, such that $w\in \sum_{e\in E(G)} \mathbb{R} v(e)$. 
Now, we treat two parts of the lemma separately. 
\newline
(I) $G$ is a bipartite graph with bipartition $(\{1,...,r\},\{r+1,...,n\})$. In this case, It is known that  the rank of $M(G)$ is $n-1$.
It is then easy to see that  a vector $w=\sum_{i=1}^n a_i f_i\in    \mathbb{R}^n$
belongs to  $\sum_{e\in E(G)} \mathbb{R} v(e)$ if and only if $\sum_{i=1}^r a_i=\sum_{i=r+1}^n a_i$.  So   
the dimension of $V(K)$ is equal to $l-1$.
Therefore the vector space $V(K)$ is one dimensional if and only if $l=2$.
Let $l=2$, i.e. $J_1\cup J_2=\{1,...,r\}$ and $K_1\cup K_2=\{r+1,...,n\}$, because $k=0$. Using the above description of vectors in $V(K)$, we see that 
the vector 
$$u_{J_1,K_1}=\frac{|J_1|+|K_1|}{n}(-\sum_{i=1}^r f_i+\sum_{i=r+1}^{n} f_{i})+\sum_{i\in J_1} f_i-\sum_{j\in K_1} f_j,$$
forms a basis for   the vector space $V(K)$, and we are done. 
\newline 
(II) $G$ is not a bipartite graph. In this case, it is known that the rank of $M(G)$ is $n$, or equivalently,  $\sum_{e\in E(G)} \mathbb{R} v(e)=\mathbb{R}^n$.
So, the dimension of $V(K)$ is equal to 
$l$. Therefore the vector space $V(K)$ is one dimensional if and only if $l=1$. Let $l=1$, i.e.  
$J_1\cup K_1=\{1,...,n\}$. Using the above description of vectors in $V(K)$, we see that 
the vector $\sum_{i\in J_1} f_i-\sum_{j\in K_1} f_j$,
forms a basis for   the vector space $V(K)$, and we are done.  

\end{proof}

Now, we answer the question raised in the beginning of this part.

\begin{proposition}\label{Farkas graph}

The  incidence matrix of a graph $G$ is a  Farkas matrix
if and only if $G$ does not have two edge-disjoint cycles of odd lengths, connected by a path.  

\end{proposition}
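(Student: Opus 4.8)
The plan is to translate the statement into the language of Proposition \ref{Farkas--related, second characterization} and then classify the elementary integral relations among the columns $v(e)=f_i+f_j$ of $M(G)$. A relation $\sum_{e\in E(G)} a_e\, v(e)=0$ is, by the definition of $v(e)$, exactly the system of vertex conditions $\sum_{e\ni k} a_e=0$ for every vertex $k$; that is, the relations are precisely the vectors in the kernel of $M(G)$. Hence, by Proposition \ref{Farkas--related, second characterization}, $M(G)$ is a Farkas matrix if and only if every elementary integral vector of $\ker M(G)$ has all of its coordinates in $\{-1,0,1\}$. The support of such a relation is a subgraph $H$ of $G$ (the edges $e$ with $a_e\neq 0$), and I would first pin down the possible shapes of $H$.

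I would record two local facts. If $H$ had a vertex of degree one, the corresponding edge would be forced to carry coefficient $0$, so every vertex of a minimal support has degree at least two; and at a vertex of degree two the two incident edges carry opposite coefficients, so $|a_e|$ is constant along any path of degree-two vertices. Next, the minimality of the support forces $\ker M(H)$ to be one-dimensional: were it not, subtracting a suitable multiple of our elementary vector from a second, independent kernel vector supported in $E(H)$ would produce a nonzero kernel vector of strictly smaller support. Combining one-dimensionality with the rank formulas recalled in the proof of Lemma \ref{G-indecomposable} (namely $\mathrm{rank}\,M(H)=|V(H)|-1$ for $H$ connected bipartite and $=|V(H)|$ for $H$ connected non-bipartite), the connected graph $H$ must have cyclomatic number one and be an even cycle, or cyclomatic number two and be non-bipartite. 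Suppressing degree-two vertices, the latter reduces to a multigraph with all degrees at least three and cyclomatic number two, of which there are only two: three parallel edges (a theta graph) or two loops joined by an edge (a dumbbell), together with the one-vertex degenerate case of two loops (a figure-eight). A theta graph is excluded because its unique relation is supported on its single even cycle, a strictly smaller support; and a dumbbell or figure-eight in which one of the two cycles is even is excluded for the same reason. Thus the supports of elementary vectors are exactly even cycles, figure-eights of two odd cycles sharing a vertex, and dumbbells of two edge-disjoint odd cycles joined by a path of length at least one.

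It then remains to compute the coefficients. On an even cycle the degree-two conditions force the coefficients to alternate $+1,-1,\dots$, so they lie in $\{-1,1\}$. The decisive computation is the odd-cycle one: if an odd cycle meets the rest of $H$ only at a vertex $u_0$, then propagating the degree-two conditions around the cycle forces the two cycle-edges at $u_0$ to carry \emph{equal} coefficients $s$, so the vertex condition at $u_0$ reads $2s+(\text{sum of external coefficients at }u_0)=0$. For a figure-eight this balances the two odd cycles against one another and yields all coefficients $\pm 1$. For a dumbbell, however, the external contribution at $u_0$ is a single path-edge, which is therefore forced to be $-2s$; propagating along the path (where $|a_e|$ is constant) gives path coefficients $\pm 2$ and cycle coefficients $\pm 1$, a relatively prime integral vector containing $\pm 2$.

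Putting these cases together proves the proposition in both directions at once: an elementary integral relation with a coefficient outside $\{-1,0,1\}$ exists if and only if a dumbbell of two edge-disjoint odd cycles joined by a path is present in $G$, which by Proposition \ref{Farkas--related, second characterization} is precisely the failure of the Farkas property. The step I expect to be the crux is the structural classification of the supports, and in particular verifying that cyclomatic number two together with one-dimensionality of $\ker M(H)$ leaves only the dumbbell and figure-eight (both cycles odd) as genuinely new minimal supports, while excluding the theta graph by exhibiting its relation on the embedded even cycle. Once the shapes are known, the coefficient computations are routine applications of degree-two balancing and the odd-cycle doubling identity $2s+\dots=0$.
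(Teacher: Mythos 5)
Your first step---reducing via Proposition \ref{Farkas--related, second characterization} to the claim that every elementary integral vector of the null space of $M(G)$ has entries in $\{-1,0,1\}$---is exactly the paper's. From there the routes diverge: the paper disposes of that claim by citing Proposition 4.2 and Corollary 4.1 of \cite{Vi}, whereas you prove the classification of elementary vectors from scratch. Your argument is sound: minimality of support forces the support subgraph $H$ to be connected with minimum degree two and with $\ker M(H)$ one-dimensional; the rank formulas then force $H$ to be an even cycle, or a connected non-bipartite graph with $|E(H)|=|V(H)|+1$; the enumeration of the latter (subdivisions of the theta, the dumbbell, or the figure-eight) is correct, as are the exclusion of the theta (a non-bipartite theta contains exactly one even cycle, whose alternating vector has strictly smaller support) and the coefficient computations ($\pm 1$ on even cycles and on figure-eights, $\pm 2$ on the connecting path of a dumbbell). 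Beyond self-containedness, your route buys a sharper reading of the statement itself: your computation shows that two edge-disjoint odd cycles meeting in a vertex (a figure-eight, e.g.\ the bowtie graph) produce an elementary vector with entries $\pm 1$ and hence do \emph{not} destroy the Farkas property; only two \emph{vertex-disjoint} odd cycles joined by a path of positive length do. So ``connected by a path'' must be read as a nontrivial path between vertex-disjoint cycles---a point your argument makes explicit where the citation leaves it implicit.

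Two small points to tighten, neither a real gap. First, state explicitly that a minimal support is connected (otherwise restricting the vector to one component yields a nonzero kernel vector of strictly smaller support); you use connectedness when invoking the rank formulas. Second, for the converse direction you need the dumbbell relation to be elementary in $\ker M(G)$, not merely in $\ker M(H)$: this follows from your own observation that $\dim\ker M(H)=1$, since any kernel vector of $M(G)$ supported inside $E(H)$ lies in $\ker M(H)$ and is therefore a multiple of the dumbbell vector, so no strictly smaller support can occur. Both deserve a sentence in the write-up.
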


\begin{proof}

By proposition \ref{Farkas--related, second characterization}, 
the incidence matrix of a graph $G$ is a  Farkas matrix
if and only if for every elementary integral vector $(a_1,...,a_m)$ in the null space of the incidence matrix, we have 
$a_1,...,a_m\in \{-1,0,1\}$. It is known that the last statement is equivalent to the statement that 
$G$ does not have two edge-disjoint cycles of odd lengths, connected by a path, see Proposition 4.2 and Corollary 4.1 in \cite{Vi}.

\end{proof}

In particular, we can apply Theorem \ref{Farkas, integers, first}  to the incidence matrix of the graphs satisfying the condition in the above proposition,
e.g. bipartite graphs. This leads to the following theorem.

\begin{theorem}  \label{GALERYSER}

Let $G$ be  a bipartite graph with bipartition $(\{1,...,r\},\{r+1,...,n\})$. 
Let $s_i, a_{e}\leq b_{e}$ be integers where $i=1,...,n$, and $e\in E(G)$. Then there exist integers 
$a_{e}\leq x_{e}\leq b_{e}$ ($e\in E(G)$), such that 
$$\sum_{i=1}^n s_i f_i=\sum_{e\in E(G)} x_{e} v(e),$$ 
if and only if the following conditions hold:\\
(1) $s_1+\cdots+s_r=s_{r+1}+\cdots+s_{n}.$\\
(2) For all sets $I\subset \{1,...,r\}, J\subset  \{r+1,...,n\}$, such that
the induced subgraphs of $G$ on $I\cup J$ and on $(\{1,...r\}\setminus I)\cup (\{r+1,...,n\}\setminus J)$ are connected, we have
$$\sum_{i\in I}s_i-\sum_{j\in J} s_{j}\leq  \sum_{e=i j, i\in I,j\notin J} b_{e}-\sum_{e= i j, i\notin I,j\in J} a_{e}.$$
% and $$\sum_{j\in J} s_{j}-\sum_{i\in I}s_i\leq  \sum_{e= i j, i\notin I,j\in J} b_{e}- \sum_{e=i j, i\in I,j\notin J} a_{e}.$$

\end{theorem}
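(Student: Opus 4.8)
The plan is to apply Theorem \ref{Farkas, integers, first} to the columns $v(e)=f_i+f_j$ (for $e=ij\in E(G)$) of the incidence matrix $M(G)$, taking $w=\sum_{i=1}^n s_i f_i$. Since $G$ is bipartite it has no odd cycle, so by Proposition \ref{Farkas graph} the columns of $M(G)$ are Farkas--related and the theorem is applicable. It is convenient to assume first that $G$ is connected, so that the explicit description in Lemma \ref{G-indecomposable}(I) is available; the general case is then recovered by running the argument on each connected component, the equation and the lattice membership decoupling across components. Granting connectedness, Theorem \ref{Farkas, integers, first} reduces the existence of the desired integers $x_e$ to two conditions: (a) $w\in\sum_{e}\mathbb{Z}\,v(e)$, and (b) the Farkas inequality displayed in that theorem holds for every $G$--indecomposable point $[u]$. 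The whole proof then amounts to identifying (a) with condition (1) and (b) with condition (2).

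For (a), I would use the functional $\phi(x)=\sum_{i=1}^r x_i-\sum_{i=r+1}^n x_i$, which vanishes on every $v(e)$, whence $\sum_e\mathbb{Z}\,v(e)\subseteq\{x\in\mathbb{Z}^n:\phi(x)=0\}$. For the reverse inclusion I would fix a spanning tree $T$ of $G$ and peel leaves: removing a leaf $v$ joined by $e=uv$ forces the coefficient of $v(e)$ to be the integer $s_v$, and replacing $w$ by $w-s_v\,v(e)$ keeps all entries integral while preserving $\phi=0$. When a single vertex remains, $\phi=0$ forces its entry to vanish, so $w$ is an integral combination of the $v(e)$. Hence (a) holds iff $w\in\mathbb{Z}^n$ and $\phi(w)=0$, i.e.\ iff condition (1) holds.

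For (b), I would substitute the explicit directions $u_{I,J}$ from Lemma \ref{G-indecomposable}(I) into the Farkas inequality. Writing $c=\frac{|I|+|J|}{n}$, the $i$--th entry of $u_{I,J}$ is $-c$ (increased by $1$ if $i\in I$) for $i\le r$ and $c$ (decreased by $1$ if $i\in J$) for $i>r$. The key simplification is that for an edge $e=ij$ with $i\le r<j$ the two contributions $\mp c$ cancel, so that $(u_{I,J},v(e))$ equals $1$ if $i\in I,\ j\notin J$, equals $-1$ if $i\notin I,\ j\in J$, and equals $0$ otherwise; in particular all these inner products lie in $\{-1,0,1\}$. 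Using condition (1) to annihilate the $c$--term gives $(u_{I,J},w)=\sum_{i\in I}s_i-\sum_{j\in J}s_j$. Since for $t\in\{-1,0,1\}$ the quantities $\frac{t-|t|}{2}$ and $\frac{t+|t|}{2}$ select exactly the negative and the positive unit values, the right-hand side collapses to $\sum_{e=ij,\,i\in I,\,j\notin J}b_e-\sum_{e=ij,\,i\notin I,\,j\in J}a_e$, so the inequality attached to $u_{I,J}$ is verbatim the inequality of condition (2). Finally $u_{I',J'}=-u_{I,J}$ for the complementary pair $I'=\{1,\dots,r\}\setminus I$, $J'=\{r+1,\dots,n\}\setminus J$, and the connectivity requirement on admissible pairs is symmetric under $(I,J)\leftrightarrow(I',J')$; thus letting $(I,J)$ range over all admissible pairs already accounts for both $[u_{I,J}]$ and $[-u_{I,J}]$, and (b) becomes exactly condition (2).

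The computations are routine once the cancellation of the factor $c$ in the edge inner products is noticed; the delicate points are bookkeeping rather than conceptual. The step I expect to require the most care is matching (b) with condition (2) \emph{exactly}: one must check that the pair $\pm u_{I,J}$ contributes precisely the inequality for $(I,J)$ and for its complement, with $a_e$ and $b_e$ attached to the correct edges and no admissible pair appearing with the wrong sign. The connectivity hypothesis also deserves attention, since Lemma \ref{G-indecomposable} is stated only for connected $G$: for disconnected $G$ the lattice condition (a) is strictly stronger than the single equation in (1) and must in effect be imposed on each component, which is exactly what the componentwise reduction supplies.
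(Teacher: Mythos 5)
Your proof follows exactly the route of the paper's own (very terse) argument: apply Theorem \ref{Farkas, integers, first} via Proposition \ref{Farkas graph} and Lemma \ref{G-indecomposable}, identify the lattice condition with (1) and the Farkas inequalities at the points $[\pm u_{I,J}]$ with (2); your explicit computations (the cancellation of the factor $(|I|+|J|)/n$ in the edge inner products, the selection of $b_e$ versus $-a_e$ according to the sign of $(u_{I,J},v(e))\in\{-1,0,1\}$, and the identification of $[-u_{I,J}]$ with $[u_{I',J'}]$ for the complementary pair) are all correct and merely fill in what the paper dismisses as ``easily simplified.'' Your observation that Lemma \ref{G-indecomposable} is stated only for connected $G$, so that for disconnected $G$ the lattice condition is strictly stronger than the single equation in (1) and must be imposed componentwise, is a legitimate and careful point that the paper's proof silently glosses over.
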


\begin{proof}

Using Theorem \ref{Farkas, integers, first}, Lemma \ref{G-indecomposable} and Proposition \ref{Farkas graph},  we see that there exist integers 
$a_{e}\leq x_{e}\leq b_{e}$ ($e\in E(G)$) such that 
$$\sum_{i=1}^n s_i f_i=\sum_{e\in E(G)} x_{e} v(e),$$ 
if and only if the following conditions hold:\\
(1) $\sum_{i=1}^n s_i f_i\in\sum_{e\in E(G)} \mathbb{Z} v(e)$,
 which one can easily see that, is equivalent to the identity $s_1+\cdots+s_r=s_{r+1}+\cdots+s_{n}$.\\
(2)  For all sets $I\subset \{1,...,r\},J\subset  \{r+1,...,n\}$, such that 
the induced subgraphs of $G$ on $I\cup J$ and on $(\{1,...r\}\setminus I)\cup (\{r+1,...,n\}\setminus J)$ are connected, we have
$$(\pm u_{I J},\sum_{i=1}^n s_i f_i)\leq \sum_{i j\in E(G)} a_{i j}\frac{(\pm u_{I J},f_i+f_j)-|( \pm u_{I J},f_i+f_j)|}{2}$$
$$+ \sum_{i j\in E(G)}  b_{i j}\frac{(\pm u_{I J},f_i+f_j)+|(\pm  u_{I  J},f_i+f_j)|}{2}.$$
This inequality, in view of (1), is easily simplified and we obtain the desired inequalities.

\end{proof}

Note that, in the special case where $G$ is the complete bipartite graph, the above theorem gives us the well-known Gale-Ryser theorem, 
see \cite{BR} for a general discussion on this theorem and related topics. 
Similarly, we obtain the following result. 

\begin{theorem}  \label{GALERYSER 2}

Let $G$ be  a  graph with $V(G)=\{1,...,n\}$, satisfying the condition in Proposition  \ref{Farkas graph}. In addition, assume that $G$ is not a bipartite graph. 
Let $s_i, a_{e}\leq b_{e}$ be integers where $i=1,...,n$, and $e\in E(G)$. Then there exist integers 
$a_{e}\leq x_{e}\leq b_{e}$ ($e\in E(G)$), such that 
$$\sum_{i=1}^n s_i f_i=\sum_{e\in E(G)} x_{e} v(e),$$ 
if and only if $\sum_{i=1}^n s_i  $ is even and for all disjoint sets $I,J  \subset \{1,...,n\}$,
such that the subgraph of $G$, whose set  of vertices is $I\cup J$ and set of edges  is $\{i j\in E(G)| i\in I, j\in J\}$, is a connected graph and
no connected component of the subgraph $G-(I\cup J)$ is a bipartite graph, we have
$$\sum_{i\in I}s_i-\sum_{j\in J} s_{j}\leq  \sum_{e=i j, i\in I,j\notin I\cup J} b_{e}+ 2 \sum_{e=i j, i,j\in I} b_{e}-\sum_{e= i j, i\notin I\cup J,j\in J} a_{e}-2\sum_{e= i j, i,j\in J} a_{e}.$$

\end{theorem}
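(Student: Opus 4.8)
The plan is to run exactly the argument used for Theorem~\ref{GALERYSER}, replacing part (I) of Lemma~\ref{G-indecomposable} by part (II). First I would invoke Proposition~\ref{Farkas graph}: since $G$ satisfies the condition stated there, its incidence matrix is a Farkas matrix, so the columns $v(e)$ ($e\in E(G)$) are Farkas--related. This permits applying Theorem~\ref{Farkas, integers, first} to the vector $w=\sum_{i=1}^n s_i f_i$, which reduces the existence of integers $a_e\le x_e\le b_e$ with $w=\sum_{e} x_e v(e)$ to the two conditions: (1) $w\in\sum_{e}\mathbb{Z} v(e)$, and (2) the displayed inequality holds for every $G$--indecomposable point $[u]\in\mathbb{R}\mathbb{P}_+^{n-1}$.

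For condition (1), I would use that $G$ is connected and not bipartite, so that (as recorded in the proof of Lemma~\ref{G-indecomposable}) the rank of $M(G)$ is $n$ and $\sum_{e}\mathbb{R} v(e)=\mathbb{R}^n$. The key arithmetic fact is that the lattice $\sum_{e}\mathbb{Z} v(e)$ generated by the vectors $f_i+f_j$ of a connected non-bipartite graph equals the lattice of integer vectors whose coordinate sum is even: every generator has even coordinate sum, while conversely each difference $f_i-f_k$ arises from an even walk and some $2f_i$ arises from traversing an odd cycle, and these together span the even-sum lattice. Hence condition (1) is equivalent to $\sum_{i=1}^n s_i$ being even.

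For condition (2), I would substitute the description of the $G$--indecomposable points from part (II) of Lemma~\ref{G-indecomposable}, namely $u=u_{I,J}:=\sum_{i\in I} f_i-\sum_{j\in J} f_j$ for disjoint sets $I,J$ meeting the stated connectivity and non-bipartiteness requirements. The remaining work is pure bookkeeping: one has $(u_{I,J},w)=\sum_{i\in I}s_i-\sum_{j\in J}s_j$, and for an edge $e=ij$ the value $(u_{I,J},v(e))=(u_{I,J},f_i+f_j)$ equals $+2,+1,0,-1,-2$ according to how the two ends of $e$ distribute among $I$, $J$, and the remaining vertices. Recognising that $\frac{x-|x|}{2}=\min(x,0)$ and $\frac{x+|x|}{2}=\max(x,0)$, the right-hand side of the inequality collapses to $2\sum_{e=ij,\,i,j\in I} b_e+\sum_{e=ij,\,i\in I,\,j\notin I\cup J} b_e-\sum_{e=ij,\,i\in J,\,j\notin I\cup J} a_e-2\sum_{e=ij,\,i,j\in J} a_e$, which (since edges are unordered) is exactly the asserted bound.

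The only genuinely non-routine point is the handling of the two signs. In part (I) the indecomposable points came in explicit $\pm$ pairs, whereas here $-u_{I,J}=u_{J,I}$, so negating a point merely interchanges the roles of $I$ and $J$. Because the defining conditions on $(I,J)$ — connectedness of the subgraph between $I$ and $J$, and non-bipartiteness of every component of $G-(I\cup J)$ — are symmetric in $I$ and $J$, ranging over all admissible \emph{ordered} pairs $(I,J)$ already encodes both signs, which is precisely why no $\pm$ survives in the final inequality. I expect the main obstacle to be exactly this sign/symmetry verification together with the careful edge-classification feeding the right-hand side; everything else is a direct transcription of the proof of Theorem~\ref{GALERYSER}.
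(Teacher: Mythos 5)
Your proposal is correct and follows essentially the same route as the paper: reduce via Proposition~\ref{Farkas graph} and Theorem~\ref{Farkas, integers, first}, identify the lattice condition with the parity of $\sum_i s_i$, and transcribe the inequality over the indecomposable points of Lemma~\ref{G-indecomposable}(II), with the $\pm$ signs absorbed by the $I\leftrightarrow J$ symmetry. The only (immaterial) difference is that you establish $\sum_{e}\mathbb{Z}v(e)=\{\text{even-sum vectors}\}$ by directly exhibiting generators ($f_i-f_k$ from even walks and $2f_i$ from an odd cycle), where the paper suggests an induction on $m+n$ starting from an odd cycle.
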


\begin{proof}

The proof is similar to the proof of Theorem \ref{GALERYSER}. The only point deserving some explanation is that  
a vector $\sum_{i=1}^n s_i f_i\in\mathbb{Z}^n$ belongs to $ \sum_{e\in E(G)} \mathbb{Z} v(e)$ if and only if
$\sum_{i=1}^n s_i f_i $ is even. In fact this can be proved inductively for any graph containing a cycle of odd length. 
One first proves this for a cycle of odd length and then uses an induction on $m+n$ to prove it for the general case. 

\end{proof}

\end{subsection}

%%%%%%%%%%%%%%%%%%%%%%%%%%%%%%%%%%%%%%%%%%%%%%%%%%%%%%%%%%%%%%%%%%%%%%%%%%%%%%%%%%%%%%%%%%%%%%%%%%%%%%%%%%%
%%%%%%%%%%%%%%%%%%%%%%%%%%%%%%%%%%%%%%%%%%%%%%%%%%%%%%%%%%%%%%%%%%%%%%%%%%%%%%%%%%%%%%%%%%%%%%%%%%%%%%%%%%%

\begin{subsection}{Incidence matrices of oriented graphs} 

We define an oriented graph to be a directed graph with no loops and no multiple arcs. 
Suppose that $D$ is an oriented graph with $V(D)=\{1,...,n\}$ and $ A(D)=\{e_1,...,e_m\}$.   
The directed incidence matrix  $N=N(D)$ 
of $D$ is defined to be the following matrix: $N$ is an $n\times m$ matrix with 
\[ N_{i j} = \left\{ 
  \begin{array}{l l}
1 & \quad \text{if vertex $i$ is the tail of $e_j$}\\
-1 & \quad \text{if vertex $i$ is the head of $e_j$}\\
0 & \quad \text{otherwise}\\
 \end{array} \right.\]    
Let $f_1,...,f_n$ denote the standard  basis of $\mathbb{Z}^n$. 
It is clear that the column of $N$ corresponding to the arc $e=\overrightarrow{i j}$ is  the vector $v(e)=v_D(e)=f_{i}-f_{j}$.
An indecomposable point with respect to  the columns of $N$ is, for simplicity, called a $D$-indecomposable point. 
First we characterize the $D$-indecomposable points for a given oriented graph $D$. An oriented graph $D$ is called 
connected if its underlying undirected graph is connected. 

\begin{lemma}\label{D-indecomposable}

Suppose that $D$ is a connected oriented graph with $V(D)=\{1,...,n\}$. Then   the set of the  $D$-indecomposable points consists of  the points
$[u_{I}]=[ n\sum_{i\in I} f_i-|I|\sum_{i=1}^n f_i],$
where $\emptyset \neq I\subsetneq \{1,...,n\}$ 
is a set such that the induced subgraphs of $D$ on $I$ and $\{1,...,n\}\setminus I$ are connected.

\end{lemma}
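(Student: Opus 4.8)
The plan is to follow the same strategy as in the proof of Lemma \ref{G-indecomposable}, replacing the undirected incidence vectors $f_i+f_j$ by the oriented ones $v(e)=f_i-f_j$. The decisive simplification is that the orientation washes out, so no bipartite/parity dichotomy appears. By Lemma \ref{indecomposable, characterization}, the task reduces to determining all subsets $K\subseteq A(D)$ for which the space
$$V(K)=\{w\in \textstyle\sum_{e\in A(D)}\mathbb{R}\, v(e) : (w,v(e))=0 \text{ for all } e\in K\}$$
is one-dimensional, and then reading off a basis vector of each such $V(K)$.

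First I would record the elementary computation that for $w=\sum_{i=1}^n a_i f_i$ and an arc $e=\overrightarrow{ij}$ one has $(w,v(e))=a_i-a_j$; hence $(w,v(e))=0$ if and only if $a_i=a_j$, independently of the orientation of $e$. Letting $H=(V(D),K)$, this says that $w\in V(K)$, subject to lying in the span, exactly when the coordinate function $i\mapsto a_i$ is constant on each connected component of the underlying undirected graph of $H$. To pin down the span itself, I would invoke the standard fact that the directed incidence matrix of a connected digraph has rank $n-1$, its left null space being spanned by $\sum_{i=1}^n f_i$; consequently $\sum_{e\in A(D)}\mathbb{R}\,v(e)$ is exactly the hyperplane $\{w:\sum_{i=1}^n a_i=0\}$.

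Combining these observations, a vector of $V(K)$ is determined by its constant values on the connected components $C_1,\dots,C_p$ of the underlying graph of $H$, subject to the single linear relation $\sum_t a_{C_t}|C_t|=0$; therefore $\dim V(K)=p-1$. Thus $V(K)$ is one-dimensional if and only if $p=2$, i.e. the underlying graph of $(V(D),K)$ has exactly two components, say $I$ and its complement $I^c=\{1,\dots,n\}\setminus I$. I would then observe that such a $K$ exists for a given partition $\{I,I^c\}$ precisely when both induced subgraphs $D[I]$ and $D[I^c]$ are connected: the arcs of $K$ lying inside $I$ form a subset of the arcs of $D[I]$ and must connect $I$, forcing $D[I]$ connected (and likewise for $I^c$); conversely, when both induced subgraphs are connected one simply takes $K$ to be the union of a spanning tree in each part, discarding all arcs between $I$ and $I^c$. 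Solving the line then amounts to assigning the value $n-|I|$ on $I$ and $-|I|$ on $I^c$, which satisfies $\sum a_i=0$ and gives the basis vector $(n-|I|)\sum_{i\in I}f_i-|I|\sum_{i\notin I}f_i=n\sum_{i\in I}f_i-|I|\sum_{i=1}^n f_i=u_I$, exactly as claimed.

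The only genuinely careful step is the equivalence \emph{two-component partition $\Leftrightarrow$ both induced subgraphs connected}, which is where the bookkeeping lives; everything else (the dimension count and the verification of the displayed formula) is routine. Two loose ends I would tie off are that distinct sets $K$ producing the same component partition yield the same point $[u_I]$, which is immediate since $[u]$ depends only on the line $V(K)$, and that the family is already closed under $I\leftrightarrow I^c$: since the connectivity condition is symmetric and $u_{I^c}=-u_I$, the list $\{[u_I]\}$ contains both $[u_I]$ and $[-u_I]$, so (in contrast to the bipartite case of Lemma \ref{G-indecomposable}) no explicit $\pm$ sign is needed in the statement.
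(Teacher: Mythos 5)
Your proposal is correct and follows essentially the same route as the paper: reduce via Lemma \ref{indecomposable, characterization} to finding the sets $K\subseteq A(D)$ with $\dim V(K)=1$, identify the span of the $v(e)$ as the hyperplane $\sum_i a_i=0$, compute $\dim V(K)=p-1$ in terms of the number $p$ of components of $(V(D),K)$, and read off $u_I$ when $p=2$. The only differences are cosmetic: you make explicit the equivalence between two-component partitions and connectivity of $D[I]$ and $D[I^c]$, and the closure of the family under $I\leftrightarrow I^c$, both of which the paper leaves implicit.
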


\begin{proof}

According to Lemma \ref{indecomposable, characterization}, we need to find all sets $K\subset A(D)$ for which the vector space
$$V(K)=\{w\in \sum_{e\in A(D)} \mathbb{R} v(e)| (w,v(e))=0\quad \text{for all}\quad e\in K\},$$
is one dimensional. Let  $H$ be the undirected graph with $V(H)=V(D)$ and $E(H)= K$. Suppose that a vector $w=\sum_{i=1}^n a_i f_i\in    \mathbb{R}^n$ belongs to $V(K)$.
It is easy to see that  if there is a path from vertex $i$ to vertex $j$ in $H$, then we must have $a_i=a_j$. 

Now,   let $H_1,...,H_l$
be the connected components of  $H$.  Denote the set of vertices of 
$H_i$ by $I_i$. From the above discussion, it is easy to see that  
the vector space  $V(K)$ consists of all vectors 
$$w=\sum_{i=1}^l (r_i \sum_{j\in I_i} f_j),$$ 
where $r_1,...,r_l\in\mathbb{R}$ are arbitrary real numbers, such that $w\in \sum_{e\in A(D)} \mathbb{R} v(e)$. 
But, it is easy to prove (by induction on $m+n$ for example) that a vector $\sum_{i=1}^n a_i f_i\in    \mathbb{R}^n$
belongs to  $\sum_{e\in A(D)} \mathbb{R} v(e)$ if and only if $\sum_{i=1}^n a_i=0$. So 
$$V(K)=\{\sum_{i=1}^l (r_i \sum_{j\in I_i} f_j)|r_1,...,r_l\in\mathbb{R},  \sum_{i=1}^l r_i|I_i|=0\}.$$ 
In particular, the vector space $V(K)$ is one dimensional if and only if $l=2$.
Let $l=2$, i.e. $I_1\cup I_2=\{1,...,n\}$. Using the above description of vectors in $V(K)$, we see that 
the vector 
$$u_{I_1}=n\sum_{i\in I_1} f_i-|I_1|\sum_{i=1}^n f_i,$$
forms a basis for the vector space $V(K)$, and we are done.

\end{proof}

For oriented graphs, we have the following result.  

\begin{proposition}\label{oriented Farkas graph}

 For arbitrary oriented graph $D$, the directed incidence matrix of $D$ is a Farkas matrix.  

\end{proposition}

\begin{proof}

By proposition \ref{Farkas--related, second characterization}, 
  the directed incidence matrix of $D$ is a Farkas matrix
if and only if for every elementary integral vector $(a_1,...,a_m)$ in the null space of $N(D)$, we have 
$a_1,...,a_m\in \{-1,0,1\}$, which is a known fact, see page 204 of \cite{RO1}. In fact one can easily show that an integral elementary vector in   
 the null space of $N(D)$ corresponds to a directed cycle in $D$.

\end{proof}

In particular, we can apply Theorem \ref{Farkas, integers, first}  to the directed incidence matrix of an oriented graph.   
This leads to the following theorem.

\begin{theorem}  \label{Landua}

Let $D$ be a connected oriented graph  with $V(D)=\{1,...,n\}$ and let $r_i, a_{e}\leq b_{e}$ be integers where $i=1,...,n$ and $e\in A(D)$. Then there exist integers 
$a_{e}\leq x_{ e}\leq b_{e }$ ($e\in A(D)$) such that 
$$\sum_{i=1}^n r_i f_i=\sum_{e\in A(D)} x_{e} w(e),$$ 
if and only if the following conditions hold:\\
(1) $r_1+\cdots+r_n=0.$\\
(2) For all sets $\emptyset \neq I\subsetneq  \{1,...,n\}$, such that the induced subgraphs of $D$ on $I$ and $\{1,...,n\}\setminus I$ are connected, 
we have
$$\sum_{i\in I} r_i\leq  \sum_{e=\overrightarrow{i j},i\in I,j\notin I} b_{e}- \sum_{e=\overrightarrow{i j}, i\notin I,j\in I} a_{e}.$$

\end{theorem}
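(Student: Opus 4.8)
Theorem \ref{Landua} — my proof plan.

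The plan is to apply Theorem \ref{Farkas, integers, first} directly to the directed incidence matrix $N(D)$, using Proposition \ref{oriented Farkas graph} to guarantee that its columns are Farkas--related and Lemma \ref{D-indecomposable} to enumerate the relevant indecomposable points. Concretely, I would set $v(e) = f_i - f_j$ for each arc $e = \overrightarrow{ij}$, take $w = \sum_{i=1}^n r_i f_i$, and invoke the theorem to conclude that the desired integer solution exists if and only if two conditions hold: first, that $w \in \sum_{e \in A(D)} \mathbb{Z}\, v(e)$, and second, that the displayed inequality holds for $u$ ranging over a set of representatives of the $D$-indecomposable points $[u]$.

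For condition (1), I would show that $\sum_{i=1}^n r_i f_i$ lies in the integer span $\sum_{e} \mathbb{Z}\, v(e)$ precisely when $r_1 + \cdots + r_n = 0$. The ``only if'' direction is immediate since every column $f_i - f_j$ has coordinate-sum zero. For the ``if'' direction, connectedness of $D$ lets one write any vector of the form $f_i - f_j$ as an integer combination of the $v(e)$ by following a path (reversing orientation contributes a sign), and any coordinate-sum-zero integer vector decomposes into such differences; this is exactly the real-span computation already carried out inside the proof of Lemma \ref{D-indecomposable}, now done over $\mathbb{Z}$.

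For condition (2), the main work is translating the abstract inequality of Theorem \ref{Farkas, integers, first} into the clean combinatorial form stated. By Lemma \ref{D-indecomposable} the indecomposable points are exactly $[u_I] = [\, n\sum_{i\in I} f_i - |I|\sum_{i=1}^n f_i\,]$ for $\emptyset \neq I \subsetneq \{1,\dots,n\}$ with both $I$ and its complement inducing connected subgraphs; by the Second Reduction the inequality for $[u_I]$ is unchanged under positive scaling, so I would work with the convenient representative $u_I' = \sum_{i\in I} f_i$ in place of $u_I$ (one checks $[u_I] = [u_I']$ only up to the subtraction of a multiple of $\sum_i f_i$, which is orthogonal to every $v(e)$ and pairs to zero against $w$ once $\sum_i r_i = 0$, so the substitution is legitimate). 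With this representative, $(u_I', w) = \sum_{i\in I} r_i$, and $(u_I', v(e)) = (u_I', f_i - f_j)$ equals $+1$ when $e = \overrightarrow{ij}$ leaves $I$ (i.e. $i\in I$, $j\notin I$), equals $-1$ when $e$ enters $I$, and equals $0$ otherwise. Substituting these three values into $\sum_e a_e \frac{(u,v(e)) - |(u,v(e))|}{2} + \sum_e b_e \frac{(u,v(e)) + |(u,v(e))|}{2}$ kills the terms with $(u_I', v(e)) = 0$, keeps $b_e$ on outgoing arcs and $-a_e$ on incoming arcs, yielding exactly the right-hand side $\sum_{e = \overrightarrow{ij},\, i\in I,\, j\notin I} b_e - \sum_{e=\overrightarrow{ij},\, i\notin I,\, j\in I} a_e$.

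The step I expect to require the most care is the legitimacy of replacing $u_I$ by the scaled representative $u_I'$: one must verify that the inequality for $[u_I]$ and the inequality for $u_I'$ are genuinely the same statement. This hinges on the facts that $u_I - c\,u_I'$ is a multiple of $\sum_i f_i \in (\sum_e \mathbb{R}\,v(e))^{\perp}$ for the appropriate constant $c$, that $(\sum_i f_i, w) = \sum_i r_i = 0$ under condition (1), and that $(\sum_i f_i, v(e)) = 0$ for every arc — so adding such a multiple changes neither side of the inequality. Granting this, the remaining bookkeeping is the routine sign analysis sketched above, and the proof closes by noting that the single representative $u_I'$ (rather than both $\pm u_I$) suffices here because, unlike the bipartite case of Theorem \ref{GALERYSER}, Lemma \ref{D-indecomposable} already lists each indecomposable point once via the ranging set $I$, with the complementary set $\{1,\dots,n\}\setminus I$ supplying the ``$-I$'' direction.
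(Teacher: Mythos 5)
Your proposal is correct and follows essentially the same route as the paper: the paper likewise deduces the theorem by applying Theorem \ref{Farkas, integers, first} together with Proposition \ref{oriented Farkas graph} and Lemma \ref{D-indecomposable}, singling out only the fact that $\sum_{i=1}^n r_i f_i$ lies in the integer span of the columns iff $r_1+\cdots+r_n=0$ and leaving the remaining translation implicit. Your careful justification of replacing $u_I$ by $\sum_{i\in I} f_i$ (modulo the vector $\sum_i f_i$, which is orthogonal to every column and pairs to zero with $w$ under condition (1)) and your observation that $[-u_I]=[u_{\{1,\dots,n\}\setminus I\}}]$ so one representative per $I$ suffices are exactly the details the paper's terse proof omits.
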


\begin{proof}

The proof  is similar to the proof of Theorem \ref{GALERYSER}. 
The only point deserving some explanation is the following: The vector $\sum_{i=1}^n r_i f_i\in \mathbb{Z}^n$ belongs to $\sum_{e\in A(D)} \mathbb{Z} w(e)$, if and only if
$r_1+\cdots+r_n=0.$ This statement can easily be proved by induction on $m+n$.

\end{proof}

We can use the above theorem to derive a result concerning "signed graphical sequences" as follows. Let $D$ be  an oriented graph with  $V(D)=\{1,...,n\}$.
For every vertex $i$, denote the outdegree and the indegree of $i$ by $d^+(i)$ and $d^-(i)$. The total degree of $i$ is defined by $d^0(i)=d^+(i)-d^-(i)$.
The sequence $(d^0(1),...,d^0(n))$ is called the \textit{signed degree sequence} of $D$. A sequence $(d_1,...,d_n)$ of integers is called a \text{signed graphical sequence} if there is an 
oriented graph on vertices $1,...,n$ such that $d_1=d^0(1),...,d_n=d^0(n)$. Now, we have the following result characterizing signed graphical sequences, see also \cite{Av}.

\begin{corollary}\label{oriented graphical sequences}

A nonincreasing  sequence $(d_1,...,d_n)$ of integers is a signed graphical sequence if and only if the following conditions hold:\\
(1) $d_1+\cdots+d_n=0.$\\
(2) For all natural numbers $1\leq l \leq n$,  we have $\sum_{i= 1}^l d_i\leq l(n-l).$

\end{corollary}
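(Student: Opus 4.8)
The plan is to obtain the corollary as a direct specialization of Theorem \ref{Landua} to the complete transitive tournament, after encoding oriented graphs as sign patterns. First I would fix the oriented graph $D$ on $\{1,\dots,n\}$ whose arc set is $\{\overrightarrow{ij}:i<j\}$; its underlying undirected graph is $K_n$, so $D$ is connected and, crucially, every nonempty vertex subset induces a connected subgraph. For each arc $e$ of $D$ I allow $x_e\in\{-1,0,1\}$, reading $x_e=1$ as keeping $\overrightarrow{ij}$, $x_e=-1$ as the reversed arc $\overrightarrow{ji}$, and $x_e=0$ as deleting the pair $\{i,j\}$. This gives a bijection between tuples $(x_e)_{e\in A(D)}\in\{-1,0,1\}^{A(D)}$ and oriented graphs on $\{1,\dots,n\}$. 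Since the column of $N(D)$ attached to $e=\overrightarrow{ij}$ is $w(e)=f_i-f_j$, the coefficient of $f_i$ in $\sum_e x_e w(e)$ equals the signed degree $d^0(i)$ of the corresponding oriented graph. Hence $(d_1,\dots,d_n)$ is a signed graphical sequence if and only if there exist integers $-1\le x_e\le 1$ with $\sum_{i=1}^n d_i f_i=\sum_{e\in A(D)} x_e w(e)$.

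Next I would apply Theorem \ref{Landua} with $r_i=d_i$, $a_e=-1$, $b_e=1$. Its condition (1), $r_1+\cdots+r_n=0$, is exactly condition (1) of the corollary. For condition (2), the connectivity hypothesis imposes no restriction here, since in the complete tournament both $I$ and its complement always induce connected subgraphs; and with $a_e=-1,\,b_e=1$ the right-hand side $\sum_{e=\overrightarrow{ij},i\in I,j\notin I}b_e-\sum_{e=\overrightarrow{ij},i\notin I,j\in I}a_e$ counts the arcs of $D$ joining $I$ to its complement in either direction, which is exactly one arc per crossing pair, namely $|I|(n-|I|)$. Thus Theorem \ref{Landua}(2) becomes: for every nonempty proper $I\subset\{1,\dots,n\}$, $\sum_{i\in I} d_i\le |I|\,(n-|I|)$.

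The remaining, and key, step is to collapse these exponentially many subset inequalities into the $n$ prefix inequalities stated in the corollary. Because the bound $|I|(n-|I|)$ depends only on $l=|I|$, the most demanding inequality among subsets of size $l$ is the one maximizing $\sum_{i\in I}d_i$; as $(d_i)$ is nonincreasing, that maximum is attained at $I=\{1,\dots,l\}$, giving $\sum_{i=1}^l d_i$. Hence the whole family of subset inequalities is equivalent to $\sum_{i=1}^l d_i\le l(n-l)$ for $l=1,\dots,n-1$, which is condition (2); the case $l=n$ is vacuous, being forced by condition (1) with equality. Chaining the equivalences then yields the characterization.

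The main obstacle, modest as it is, lies entirely in this last monotonicity reduction: one must notice that the right-hand bound is a function of $|I|$ alone, so that the sorted prefix sums dominate all size-$l$ subset sums, and simultaneously check that the connectivity requirement in Theorem \ref{Landua} is automatically met by $K_n$. The encoding of oriented graphs by $\{-1,0,1\}$-valued arc weights and the evaluation of the crossing-arc count are routine bookkeeping with the directed incidence matrix.
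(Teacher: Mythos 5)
Your proposal is correct and follows essentially the same route as the paper: specialize Theorem \ref{Landua} to the transitive tournament on $\{1,\dots,n\}$ with $a_e=-1$, $b_e=1$, observe that the connectivity hypotheses are automatic for $K_n$ and that the right-hand side of each subset inequality is $|I|(n-|I|)$, and then use the nonincreasing hypothesis to reduce to prefix sums. You have merely written out the bookkeeping that the paper dismisses with ``one can easily finish the proof.''
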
  

\begin{proof}

Let $D$ be the following oriented graph. The set of vertices of $D$ is $V(D)=\{1,...,n\}$, and 
the  set of arcs of $D$ is $A(D)=\{\overrightarrow{i j}|1\leq i< j\leq n\}$. Then it is easy to see that 
a sequence $(d_1,...,d_n)$ of integers is a signed graphical sequence if and only if there exist integers $x_e\in\{-1,0,1\}$ ($e\in A(D)$), such that 
$\sum_{i=1}^n d_i f_i=\sum_{e\in A(D)} x_{e} w(e)$. Setting, $a_e=-b_e=-1$ ($e\in A(D)$) in Theorem \ref{Landua}, one can easily finish the proof.

\end{proof}

\end{subsection}

%%%%%%%%%%%%%%%%%%%%%%%%%%%%%%%%%%%%%%%%%%%%%%%%%%%%%%%%%%%%%%%%%%%%%%%%%%%%%%%%%%%%%%%%%%%%%%%%%%%%%%%%%%%
%%%%%%%%%%%%%%%%%%%%%%%%%%%%%%%%%%%%%%%%%%%%%%%%%%%%%%%%%%%%%%%%%%%%%%%%%%%%%%%%%%%%%%%%%%%%%%%%%%%%%%%%%%%

\begin{subsection}{Orientations on graphs} 

Suppose that $G$ 
is a graph  with $V(G)=\{1,...,n\}$ and $E(G)=\{e_1,...,e_m\}$. Let $f_1,...,f_n,g_1,...,g_m$ be the standard basis for 
$\mathbb{Z}^n\oplus  \mathbb{Z}^m$. For each edge $e=e_k=i j\in E(G)$ ($i<j$), we set
$z(e)=f_i-f_j+g_k$ and $z'(e)=-f_i+f_j+g_k$. 
An indecomposable point with respect to  the vectors $z(e),z'(e)$  ($e\in E(G)$) is, for simplicity, called a $(G)$-indecomposable point. 
For a set $I\subset \{1,...,n\}$, we denote the set of edges with only one end in $I$ by $E(I)$.
First we characterize the $(G)$-indecomposable points for a given graph $G$. 

\begin{lemma}\label{(G)-indecomposable}

Suppose that $G$ is a connected graph with $V(G)=\{1,...,n\}$, as above. Then   the set of $(G)$-indecomposable points consists of  (1) the points
$$[ n\sum_{i\in I} f_i-|I|\sum_{i=1}^n f_i+n(\sum_{e_k\in E(I)\setminus J}g_k-\sum_{e_k\in J}g_k)],$$
with $J\subset E(I)$, where $\emptyset \neq I\subsetneq \{1,...,n\}$ 
is a set such that the induced subgraphs of $G$ on $I$ and $\{1,...,n\}\setminus I$ are connected, and (2) the points $[\pm g_k]$, where 
$e_k\in E(G)$ is an edge such that $G-{e_k}$ is connected.

\end{lemma}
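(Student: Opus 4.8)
The plan is to mimic the proofs of Lemmas~\ref{G-indecomposable} and~\ref{D-indecomposable}: by Lemma~\ref{indecomposable, characterization}(1) it suffices to locate every subset $K$ of the vector system $\{z(e),z'(e):e\in E(G)\}$ for which the space $V(K)$ is one-dimensional, and to read off its generator. First I would pin down the ambient span. Since $z(e_k)+z'(e_k)=2g_k$ and $z(e_k)-z'(e_k)=2(f_i-f_j)$ for $e_k=ij$, the span $V:=\sum_{e}(\mathbb{R}z(e)+\mathbb{R}z'(e))$ contains every $g_k$ and every $f_i-f_j$; as $G$ is connected, the latter span $\{\sum a_i f_i:\sum_i a_i=0\}$, so $V=\{\sum_i a_i f_i+\sum_k b_k g_k:\sum_i a_i=0\}$, of dimension $m+n-1$.

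Next I would record the inner products. For $w=\sum_i a_i f_i+\sum_k b_k g_k\in V$ and $e_k=ij$ (with $i<j$) one has $(w,z(e_k))=a_i-a_j+b_k$ and $(w,z'(e_k))=-(a_i-a_j)+b_k$. I then classify each edge $e_k$ by its pattern in $K$: type $B$ if both $z(e_k),z'(e_k)\in K$, which forces $a_i=a_j$ and $b_k=0$; type $Z$ or $Z'$ if exactly one lies in $K$, which merely forces $b_k=\pm(a_i-a_j)$; and type $N$ if neither lies in $K$, leaving $b_k$ free. The crux is the resulting dimension formula: letting $G_B$ be the spanning subgraph of $G$ formed by the type-$B$ edges, $c$ its number of connected components, and $N$ the number of type-$N$ edges, the vector $a$ must be constant on each component of $G_B$, so together with $\sum_i a_i=0$ the $a$-part carries $c-1$ free parameters; given $a$, each $b_k$ is then forced (to $0$ on type $B$, to $\pm(a_i-a_j)$ on type $Z,Z'$) except for the $N$ free coordinates from type-$N$ edges. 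Hence $\dim V(K)=(c-1)+N$, which equals $1$ in exactly two cases: $c=2,\ N=0$ or $c=1,\ N=1$.

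In the case $c=2,\ N=0$, the subgraph $G_B$ partitions $\{1,\dots,n\}$ into two nonempty blocks $I,I'$; since each block is connected inside $G_B\subseteq G$, the induced subgraphs of $G$ on $I$ and on $I'$ are connected, which is the stated condition. Normalizing, I get $a_i=n-|I|$ for $i\in I$ and $a_i=-|I|$ otherwise, while $b_k=0$ on edges inside a block and $b_k=\pm n$ on the cut edges $E(I)$, the sign recording which of $z(e_k),z'(e_k)$ belongs to $K$. Gathering the edges with $b_k=-n$ into a set $J\subset E(I)$ produces exactly the vectors of family (1). Conversely, for a vector $u$ of family (1), the explicit inner products show that $I_u$ makes every within-block edge type $B$ and every cut edge type $Z/Z'$, so $c=2$, $N=0$ and $u$ is indecomposable; the substitution $I\leftrightarrow\{1,\dots,n\}\setminus I$ sends $u$ to a scalar multiple of $-u$, which is why family (1) already contains both $[u]$ and $[-u]$.

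In the case $c=1,\ N=1$, the subgraph $G_B$ is spanning and connected, so $a$ is constant and $\sum_i a_i=0$ forces $a=0$; then every $b_k$ vanishes except the single free coordinate of the unique type-$N$ edge $e_k$, giving $V(K)=\mathbb{R}g_k$. Because $G_B\subseteq G-e_k$, connectedness of $G_B$ demands that $G-e_k$ be connected, and conversely declaring all edges but $e_k$ to be type $B$ realizes this; this yields precisely the points $[\pm g_k]$ with $G-e_k$ connected, i.e. family (2). I expect the main obstacle to be the bookkeeping behind the dimension formula: verifying that type-$Z/Z'$ edges lying inside a single component of $G_B$ add no dimension (their $b_k$ is pinned to $0$ since $a$ is constant there) and translating ``$G_B$ has two components'' into the clean requirement that the induced subgraphs of $G$ on $I$ and on its complement be connected. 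The remaining steps — the explicit evaluation of $(u,z(e_k))$, $(u,z'(e_k))$ and the complementation symmetry — are routine.
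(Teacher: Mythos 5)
Your proposal is correct and follows essentially the same route as the paper: reduce via Lemma~\ref{indecomposable, characterization}(1) to finding the orthogonality patterns with one-dimensional $V$, identify the ambient span as $\{\sum_i a_i f_i+\sum_k b_k g_k:\sum_i a_i=0\}$, classify each edge by which of $z(e_k),z'(e_k)$ it contributes (your types $B$, $Z/Z'$, $N$ correspond exactly to the paper's $K\cap K'$, $K\setminus K'$, $K'\setminus K$ and the complement of $K\cup K'$), and derive the dimension count $(c-1)+N$, whose two solutions give families (1) and (2). Your added remarks on the converse verification and the complementation symmetry $I\leftrightarrow\{1,\dots,n\}\setminus I$ are correct and only make explicit what the paper leaves implicit.
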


\begin{proof}

According to Lemma \ref{indecomposable, characterization}, we need to find all sets $K, K'\subset E(G)$ for which the vector space
$$V(K, K')=$$
$$\{w\in \sum_{e\in E(G)} \mathbb{R} z(e)+\sum_{e\in E(G)} \mathbb{R} z'(e)| (w,z(e))=(w,z'(e'))=0\,\, \text{for all}\,\, e\in K, e'\in K'\},$$
is one dimensional. Let  $H$ be the subgraph of $G$ with $V(H)=V(G)$ and $E(H)= K\cap K'$. Suppose that a vector 
$w=\sum_{i=1}^n a_i f_i+\sum_{j=1}^m b_j g_j\in    \mathbb{R}^n\oplus  \mathbb{R}^m$ belongs to $V(K, K')$.
It is easy to see that if there is a path from vertex $i$ to vertex $j$ in $H$, then we have $a_i=a_j$. Moreover
 for every edge $e_k\in K\cap K'$ we have $b_k=0$; for every
edge $e_k=i j\in K\setminus K'$ ($i<j$) we have $b_k=a_j-a_i$; and for every 
edge $e_k=i j\in K'\setminus K$ ($i<j$) we have $b_k=a_i-a_j$. 

Now,   let $H_1,...,H_l$
be the connected components of the graph $H$.  Denote the set of vertices of 
$H_i$ by $I_i$. An edge $e\in E(G)$ is said to be of type $(i,j)$ (where $i< j$) if one of its  vertices belong to $I_i$ and 
the other one belongs to $I_j$. From the above discussion, it is easy to see that  
the vector space  $V(K, K')$ consists of all vectors 
$$w=\sum_{i=1}^l (r_i \sum_{j\in I_i} f_j)+\sum_{e_k\in K\setminus K'\, \text{ is of type (i,j)}} (r_j-r_i)g_k$$
$$+\sum_{e_k\in K'\setminus K\, \text{is of type (i,j)}} (r_i-r_j)g_k+\sum_{e_k\notin K\cup K'} s_k g_k ,$$ 
where $r_1,...,r_l,s_k\in\mathbb{R}$ (for $k$ with $e_k\notin K\cup K'$) are arbitrary real numbers, 
such that $w\in \sum_{e\in E(G)} \mathbb{R} z(e)+\sum_{e\in E(G)} \mathbb{R} z'(e)$. 
But, since we have
$$\sum_{e\in E(G)}\mathbb{R} z(e)+\sum_{e\in E(G)} \mathbb{R} z'(e)=\sum_{i j\in E(G)}\mathbb{R}(f_i-f_j)+\sum_{k=1}^m \mathbb{R} g_k,$$
it is easy to prove (by induction on $m+n$ for example) that a 
vector $\sum_{i=1}^n a_i f_i+\sum_{j=1}^m b_j g_j\in    \mathbb{R}^n\oplus  \mathbb{R}^m$
belongs to  $ \sum_{e\in E(G)} \mathbb{R} z(e)+\sum_{e\in E(G)} \mathbb{R} z'(e)$
 if and only if $\sum_{i=1}^n a_i=0$. So, the vector space $V(K, K')$ consists of all vectors 
$$w=\sum_{i=1}^l (r_i \sum_{j\in I_i} f_j)+\sum_{e_k\in K\setminus K'\, \text{ is of type (i,j)}} (r_j-r_i)g_k$$
$$+\sum_{e_k\in K'\setminus K\, \text{is of type (i,j)}} (r_i-r_j)g_k+ \sum_{e_k\notin K\cup K'} s_k g_k,$$ 
where $r_1,...,r_l,s_k\in\mathbb{R}$ (for $k$ with $e_k\notin K\cup K'$) are arbitrary real numbers, such that 
$ \sum_{i=1}^l r_i|I_i|=0$. In particular, the vector space $V(K, K')$
 is one dimensional if and only if $l+|\{k|e_k\notin K\cup K'\}|=2$ if and only if
$l=2, |\{k|e_k\notin K\cup K'\}|=0$, or $l=|\{k|e_k\notin K\cup K'\}|=1$. In the first case, we have
$$I_1\cup I_2=\{1,...,n\}, E(G)=E(H_1)\cup E(H_2)\cup (K\setminus K')\cup (K'\setminus K).$$ 
Setting $I=I_1$, $J=K\setminus K'$ and using the above description of vectors in $V(K)$, we see that 
the vector 
$$u_{I,J}=n\sum_{i\in I} f_i-|I|\sum_{i=1}^n f_i+n(\sum_{e_k\in E(I)\setminus J}g_k-\sum_{e_k\in J}g_k),$$
forms a basis for the vector space $V(K)$. This gives us the $(G)$-decomposable points of form (1). 
In the second case, there is a unique edge $e_k\notin K\cup K'$ such that
$G- e_k$ is connected and the corresponding $(G)$-indecomposable point is $[\pm g_k]$.

\end{proof}

Next, we show that the vectors $z(e),z'(e)$  ($e\in E(G)$) are Farkas--related. 

\begin{proposition}\label{directed Farkas graph 1}

 For every connected graph $G$,  the vectors $z(e),z'(e)$  ($e\in E(G)$) are Farkas--related.  

\end{proposition}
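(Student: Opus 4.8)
The plan is to verify the criterion of Proposition \ref{Farkas--related, second characterization}: I must show that in every elementary integral relation
$$\sum_{k=1}^m \alpha_k z(e_k) + \sum_{k=1}^m \beta_k z'(e_k) = 0$$
among the $2m$ vectors $z(e),z'(e)$ ($e\in E(G)$), all coefficients $\alpha_k,\beta_k$ lie in $\{-1,0,1\}$. Writing $e_k = i_k j_k$ with $i_k < j_k$, I would first read off coordinates. The coefficient of $g_k$ on the left-hand side is exactly $\alpha_k + \beta_k$, since $g_k$ occurs (with coefficient $+1$) only in $z(e_k)$ and $z'(e_k)$. Hence every relation forces $\beta_k = -\alpha_k$ for all $k$.

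Substituting $\beta_k=-\alpha_k$ and using $z(e_k) - z'(e_k) = 2(f_{i_k} - f_{j_k})$, the relation collapses to $\sum_{k} \alpha_k (f_{i_k} - f_{j_k}) = 0$. In other words, if $\vec G$ denotes the oriented graph obtained from $G$ by orienting each edge $ij$ ($i<j$) as $\overrightarrow{ij}$, then $f_{i_k}-f_{j_k}=v_{\vec G}(e_k)$ is precisely the column of the directed incidence matrix $N(\vec G)$ attached to $e_k$, so the vector $(\alpha_1,\dots,\alpha_m)$ lies in the null space of $N(\vec G)$; conversely every such $\alpha$ produces the relation $(\alpha,-\alpha)$. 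Thus $\alpha \mapsto (\alpha, -\alpha)$ is a linear isomorphism from the null space of $N(\vec G)$ onto the space of relations among the $z(e),z'(e)$.

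The crux is to transport the notion of \emph{elementary integral vector} across this isomorphism. Since $\operatorname{supp}((\alpha,-\alpha)) = \operatorname{supp}(\alpha) \cup (m + \operatorname{supp}(\alpha))$ is a disjoint union determined entirely by $\operatorname{supp}(\alpha)$, inclusion of supports is both preserved and reflected; hence $(\alpha, -\alpha)$ has minimal support in the relation space exactly when $\alpha$ has minimal support in the null space of $N(\vec G)$. Likewise the nonzero entries of $(\alpha,-\alpha)$ are precisely $\{\pm \alpha_k : \alpha_k\neq 0\}$, so they are relatively prime exactly when the nonzero entries of $\alpha$ are. Therefore $(\alpha,-\alpha)$ is an elementary integral relation among the $z(e),z'(e)$ if and only if $\alpha$ is an elementary integral vector of the null space of $N(\vec G)$.

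Finally I would invoke Proposition \ref{oriented Farkas graph}, which says that the directed incidence matrix $N(\vec G)$ is a Farkas matrix; by Proposition \ref{Farkas--related, second characterization} this means every elementary integral vector $\alpha$ of its null space satisfies $\alpha_k \in \{-1,0,1\}$. Consequently $\alpha_k$ and $\beta_k=-\alpha_k$ all lie in $\{-1,0,1\}$, which is exactly what the criterion requires. I expect the main obstacle to be the bookkeeping of the third paragraph — making the equivalence of ``elementary integral'' on the two sides watertight — rather than any genuinely new idea; in particular, the connectivity hypothesis on $G$ plays no essential role, since Proposition \ref{oriented Farkas graph} holds for every oriented graph.
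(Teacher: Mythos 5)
Your proof is correct and takes essentially the same route as the paper: the paper arranges the vectors $z(e),z'(e)$ as the columns of the block matrix $\bigl(\begin{smallmatrix} N_0 & -N_0 \\ I_m & I_m \end{smallmatrix}\bigr)$ and invokes Proposition \ref{construction} together with Proposition \ref{oriented Farkas graph}, and your direct computation (reading $\beta_k=-\alpha_k$ off the $g_k$-coordinates and then reducing to the null space of $N_0$) is exactly the specialization of Proposition \ref{construction} to this matrix, with the support and gcd bookkeeping inlined rather than cited. Both arguments reduce the claim to the fact that the directed incidence matrix of an oriented graph is a Farkas matrix, and your closing remark that connectivity of $G$ is not needed is also consistent with the paper's argument.
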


\begin{proof}

Consider the following orientation $D_0$ on $G$:  The edge $e=i j$ is oriented such that the smaller number between $i$ and $j$ is the tail. 
Let $N_0=N(D_0)$ be the directed incidence matrix of $D$.
 Consider an $ (n+m)\times 2 m$ integral matrix $E$ whose columns correspond
 the vectors $z(e),w(e')$  ($e\in E(G)$). With the right order of the columns, we see that 
$$ E=\begin{pmatrix}
N_0 & -N_0\\
I_m & I_m
\end{pmatrix} $$   
where $I_m$ is the $m\times m$ identity matrix. Now, the statement follows from Propositions \ref{construction}  and \ref{oriented Farkas graph}.

\end{proof}

In particular, we can apply Theorem \ref{Farkas, integers, first}, leading to the following theorem.
   
\begin{theorem}  \label{Landua 1}

Let $G$ be a connected graph  with $V(G)=\{1,...,n\}$ and $E(G)=\{e_1,...,e_m\}$. Let $r_i, s_j, a_{e}\leq b_{e}, c_{e}\leq d_{e}$ 
be integers where $i=1,...,n$, $j=1,...,m$ and $e\in E(G)$. Then there exist integers 
$a_{e}\leq x_{ e}\leq b_{e }$, $c_{e}\leq y_{ e}\leq d_{e }$ ($e\in E(G)$) such that 
$$\sum_{i=1}^n r_i f_i+\sum_{j=1}^m s_j g_j=\sum_{e\in G} x_{e} z(e)+\sum_{e\in G} y_{e} z'(e),$$ 
if and only if the following conditions hold:\\
(1) If $$\begin{pmatrix}
t_1\\
\vdots\\
t_n
\end{pmatrix}= N_0\begin{pmatrix}
s_1\\
\vdots\\
s_m
\end{pmatrix}+\begin{pmatrix}
r_1\\
\vdots\\
r_n
\end{pmatrix},$$  where $N_0$ is the matrix defined in Proposition \ref{directed Farkas graph 1}, then each $t_i$ is even and $\sum_{i=1}^n t_i=0$. \\
(2) For all sets $\emptyset \neq I\subsetneq  \{1,...,n\}$ and $J\subset E(I)$, such that the induced subgraphs of $G$ on $I$ and $\{1,...,n\}\setminus I$ are connected,
we have
$$\sum_{i\in I} r_i+\sum_{e_k\in E(I)\setminus J} s_k-\sum_{e_k\in J} s_k\leq  2\sum_{e=i j\notin J,i<j,i\in I,j\notin I} b_{e}- 2\sum_{e=i j\in J, i<j, i\notin I,j\in I} a_{e}$$
$$+ 2\sum_{e=i j\notin J,i<j,i\notin I,j\in I} d_{e}- 2\sum_{e=i j\in J, i<j, i\in I,j\notin I} c_{e}$$
(3)  For all edges $e=e_k\in E(G)$ such that the subgraph $G-e_k$ of $G$ is connected,
we have $ a_{e}+c_{e}\leq s_k\leq  b_{e}+d_{e}.$

\end{theorem}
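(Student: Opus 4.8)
The plan is to follow the same template as the proof of Theorem \ref{GALERYSER}, applying Theorem \ref{Farkas, integers, first} to the spanning set $\{z(e), z'(e) : e \in E(G)\}$, whose Farkas--relatedness is already guaranteed by Proposition \ref{directed Farkas graph 1}. Thus an integral solution with the prescribed box constraints exists if and only if two things hold: (i) the target vector $\sum_i r_i f_i + \sum_j s_j g_j$ lies in the lattice $\sum_e \mathbb{Z} z(e) + \sum_e \mathbb{Z} z'(e)$, and (ii) the inequality of Theorem \ref{Farkas, integers, first} holds at every $(G)$--indecomposable point, which Lemma \ref{(G)-indecomposable} lists explicitly as the points $u_{I,J}$ of type (1) and the points $\pm g_k$ of type (2). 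My job is therefore to translate each of these two abstract conditions into the concrete statements (1), (2), (3) of the theorem.

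First I would handle the lattice membership condition (i) to extract condition (1). Using the structure $z(e) = f_i - f_j + g_k$ and $z'(e) = -f_i + f_j + g_k$, I would note that $z(e) + z'(e) = 2g_k$ and $z(e) - z'(e) = 2(f_i - f_j)$, which suggests a change of variables essentially given by the matrix $E$ of Proposition \ref{directed Farkas graph 1}. Concretely, I expect that membership in the lattice is equivalent to the condition that the ``reduced'' vector $\sum_i t_i f_i$, with $t = N_0 s + r$ as defined in the statement, lies in $2 \cdot (\text{the cut lattice of } D_0)$; since that lattice is exactly $\{\sum t_i f_i : \sum t_i = 0\}$ (proved as in Theorem \ref{Landua} by induction on $m+n$), this unpacks precisely to: each $t_i$ is even and $\sum_i t_i = 0$. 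This bookkeeping, while routine, is the step where the auxiliary matrix $N_0$ and the substitution $Z = -DY$ from Proposition \ref{construction} must be invoked carefully to get the ``even'' divisibility exactly right.

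Next I would evaluate the Farkas inequality at the two families of indecomposable points. For the points $\pm g_k$ of type (2): since $(g_k, f_i) = 0$ and $(g_k, g_l) = \delta_{kl}$, one computes $(g_k, z(e)) = (g_k, z'(e)) = \delta_{e,e_k}$, so pairing against $+g_k$ and $-g_k$ and applying the inequality in both directions collapses (exactly as in the $V^\perp$ discussion of the First reduction) to the single two--sided bound $a_{e_k} + c_{e_k} \le s_k \le b_{e_k} + d_{e_k}$, giving condition (3). For the points $u_{I,J}$ of type (1), I would compute the pairings $(u_{I,J}, z(e))$ and $(u_{I,J}, z'(e))$ edge by edge, splitting edges according to whether they lie inside $I$, outside $I$, or cross the cut, and whether they belong to $J$; the factor $n$ in $u_{I,J}$ and the normalization $n\sum_{i\in I}f_i - |I|\sum_i f_i$ ensure the signs of these pairings are governed cleanly by the orientation $i<j$ and by membership in $J$. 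Substituting into the master inequality and using condition (1) to simplify the left--hand side (the terms involving $|I|\sum r_i$ and the global sums should cancel against the lattice constraint, as happens in Theorem \ref{GALERYSER}) should yield condition (2) after dividing through by the common factor $n$, which also explains the coefficient $2$ appearing on each term.

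The main obstacle I anticipate is the sign bookkeeping in condition (2): one must correctly determine, for each of the four edge categories appearing on the right--hand side, whether the relevant coefficient is a lower bound ($a_e$ or $c_e$) or an upper bound ($b_e$ or $d_e$), and this depends jointly on the orientation convention $i<j$ (inherited from $D_0$), on whether the crossing edge goes from $I$ to its complement or vice versa, and on whether $e \in J$ (which flips the sign of the $g_k$--component of $u_{I,J}$ and hence swaps $z$ with $z'$ in effect). Getting all four cases consistent with the formula $\frac{(u,v)\pm|(u,v)|}{2}$ from the Farkas inequality is the delicate part; the rest is a direct transcription following the already--established proofs of Theorems \ref{GALERYSER} and \ref{Landua}.
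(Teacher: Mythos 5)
Your proposal follows essentially the same route as the paper: apply Theorem \ref{Farkas, integers, first} via Proposition \ref{directed Farkas graph 1} and Lemma \ref{(G)-indecomposable}, reduce lattice membership to the solvability of $2N_0X_1 = t$ (whence ``$t_i$ even and $\sum t_i = 0$'' by the argument of Theorem \ref{Landua}), and read off conditions (2) and (3) by evaluating the Farkas inequality at the indecomposable points $[u_{I,J}]$ and $[\pm g_k]$ respectively. The paper itself only writes out the lattice-membership computation and declares the rest straightforward, so your sketch of the remaining translation is, if anything, more explicit than the published proof.
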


\begin{proof}

We need to show that 
\begin{equation}\label{inside}
\sum_{i=1}^n r_i f_i+\sum_{j=1}^m s_j g_j\in \sum_{e\in E(G)}\mathbb{Z}z(e)+\sum_{e\in E(G)}\mathbb{Z}z'(e) 
\end{equation}
if and only if each $t_i$
 is even and $\sum_{i=1}^n t_i=0$. The rest of the proof is straightforward. 
Using the notations of Proposition \ref{directed Farkas graph 1}, we see that 
\ref{inside} holds
if and only if there exist vectors $X_1,X_2\in \mathbb{Z}^m$ such that 
$$ E\begin{pmatrix}
X_1\\
X_2
\end{pmatrix}= \begin{pmatrix}
r_1\\
\vdots\\
r_n\\
s_1\\
\vdots\\
s_m
\end{pmatrix}.$$   
This equation is equivalent to the following equations 
$$N_0X_1-N_0X_2=\begin{pmatrix}
r_1\\
\vdots\\
r_n
\end{pmatrix}, X_1+X_2=\begin{pmatrix}
s_1\\
\vdots\\
s_m
\end{pmatrix}.$$
These equations, in turn, are equivalent to the following equations 
$$2 N_0X_1=  N_0\begin{pmatrix}
s_1\\
\vdots\\
s_m
\end{pmatrix}+\begin{pmatrix}
r_1\\
\vdots\\
r_n
\end{pmatrix}=\begin{pmatrix}
t_1\\
\vdots\\
t_n
\end{pmatrix}, X_2=\begin{pmatrix}
s_1\\
\vdots\\
s_m
\end{pmatrix}-X_1.$$
But the last equations have a solution if and only if   each $t_i$ is even and $\sum_{i=1}^n t_i=0$, see the proof of Theorem \ref{Landua}.

\end{proof}

Let us now consider a special case of the above theorem. We recall that  
the score vector of a directed graph $D$ is by definition $(d_D^+(1),...,d_D^+(n))$, where we assume $V(D)=\{1,...,n\}$, as usual. 

\begin{corollary}\label{directed graphical sequences 1}

Let $G$ be a connected graph. A sequence $(r_1,...,r_n)$ of integers is a score vector of some orientation on $G$,  if and only if  
$\sum_{i=1}^n r_i=|E(G)|$ and we have $2\sum_{i\in I} r_i \leq |E(I)|+\sum_{i\in I} d_G(i)$
for all sets $\emptyset \neq I\subsetneq  \{1,...,n\}$, such that the induced subgraphs of $G$ on $I$ and $\{1,...,n\}\setminus I$ are connected.

\end{corollary}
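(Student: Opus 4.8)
The plan is to deduce Corollary \ref{directed graphical sequences 1} as a specialization of Theorem \ref{Landua 1}. The key observation is that a sequence $(r_1,\dots,r_n)$ is the score vector of some orientation of $G$ precisely when we can choose, for each edge $e=e_k$, one of the two orientations recorded by $z(e)$ or $z'(e)$, so that the resulting out-degree at vertex $i$ equals $r_i$. Concretely, orienting $e_k=ij$ (with $i<j$) from $i$ to $j$ contributes $f_i-f_j$ plus a marker $g_k$, which is exactly $z(e_k)$; the reverse orientation is $z'(e_k)$. Since each edge must receive exactly one orientation, I would set up the problem so that for each $e$ the pair $(x_e,y_e)$ satisfies $x_e+y_e=1$ with $x_e,y_e\in\{0,1\}$, i.e. exactly one of the two oriented vectors is used.

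First I would translate ``$(r_1,\dots,r_n)$ is a score vector'' into the solvability of the system in Theorem \ref{Landua 1}. I would take the target vector to encode the out-degrees in the $f_i$ coordinates and the edge-selection in the $g_j$ coordinates: namely set $s_j=1$ for every $j$ (each edge is oriented once, so its marker appears with total coefficient $1$), keep the given $r_i$ in the $f_i$ slots, and impose the box constraints $a_e=c_e=0$, $b_e=d_e=1$ so that $x_e,y_e\in\{0,1\}$. The constraint $x_e+y_e=s_k=1$ (forced by the $g_k$ coordinate of the equation, since $z(e)$ and $z'(e)$ both have $g_k$-coefficient $1$) then guarantees each edge is oriented in exactly one direction, and the $f_i$ equation $\sum_i r_i f_i = \sum_e x_e(f_i-f_j)+y_e(f_j-f_i)$ records out-degree minus nothing, giving $r_i=d^+(i)$. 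This identification is the only genuinely substantive step, and I expect it to be the main obstacle: one must check carefully that the box values $0\le x_e,y_e\le 1$ together with the marker equation $x_e+y_e=1$ really do biject solutions of the linear system with orientations.

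Next I would simply substitute these choices into conditions (1), (2), (3) of Theorem \ref{Landua 1} and simplify. For condition (1), with $s_j=1$ I would compute $t_i = r_i + (N_0 \mathbf{1})_i$, where $(N_0\mathbf{1})_i = d^+_{D_0}(i)-d^-_{D_0}(i)$ is the signed degree of $i$ under the reference orientation $D_0$; the requirements ``$t_i$ even'' and ``$\sum t_i=0$'' should collapse, after using $\sum_i d^+_{D_0}(i)=|E(G)|$, into the single divisibility-free condition $\sum_{i=1}^n r_i=|E(G)|$, since $t_i=2x_i$-type parity is automatic once a real solution with $x_e+y_e=1$ exists. For condition (2), substituting $a_e=c_e=0$, $b_e=d_e=1$ and summing the crossing-edge contributions over the cut $E(I)$ should turn the right-hand side into $|E(I)|$ and merge the $J$ and $E(I)\setminus J$ terms; combined with the relation between $\sum_{e_k\in E(I)}s_k$ and the degrees $d_G(i)$ for $i\in I$, the left side $\sum_{i\in I}r_i+\dots$ should reorganize into the claimed inequality $2\sum_{i\in I}r_i\le |E(I)|+\sum_{i\in I}d_G(i)$. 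Finally, condition (3) reads $a_e+c_e\le s_k\le b_e+d_e$, i.e. $0\le 1\le 2$, which holds trivially and contributes nothing, explaining why no bridge-type condition appears in the corollary.

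The bulk of the work is therefore bookkeeping: expressing $d_G(i)=d^+_{D_0}(i)+d^-_{D_0}(i)$ and $|E(I)|$ in terms of the quantities appearing in Theorem \ref{Landua 1}, and verifying the arithmetic simplifications match the stated form exactly. I would organize this by first handling the parity part of (1), showing it is vacuous given the equality $\sum r_i=|E(G)|$ and the structure of $N_0$, and then handling the cut inequality (2) by splitting $E(I)$ according to the reference orientation of each crossing edge. The conceptual content is entirely contained in the encoding described above; once that biject\-ion is fixed, the corollary follows by direct substitution, so I would keep the write-up short and refer back to Theorem \ref{Landua 1} for each of the three conditions rather than reproving anything.
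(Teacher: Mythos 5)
There is a genuine error in your encoding, and it sits exactly at the step you yourself flag as ``the only genuinely substantive step.'' You propose to put the given $r_i$ directly into the $f_i$ slots of the target vector. But the $f_i$-coordinate of $\sum_e x_e z(e)+\sum_e y_e z'(e)$ at a vertex $i$ is not the out-degree: each $z(e)$ contributes $+1$ at the tail and $-1$ at the head, so the coordinate you recover is $d^+(i)-d^-(i)$, the \emph{signed} degree, not $d^+(i)$. Your claim that the $f_i$ equation ``records out-degree minus nothing, giving $r_i=d^+(i)$'' is false. The paper's proof avoids this by first converting: from $d^+(i)+d^-(i)=d_G(i)$ one gets $d^0(i)=2d^+(i)-d_G(i)$, so $(r_1,\dots,r_n)$ is a score vector iff $(2r_1-d_G(1),\dots,2r_n-d_G(n))$ is a signed degree sequence, and it is $2r_i-d_G(i)$ that goes into the $f_i$ slot (with $s_j=1$ and $a_e=c_e=0$, $b_e=d_e=1$ exactly as you propose).

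This is not a cosmetic slip; the corollary's statement cannot be recovered from your encoding. With $r_i$ in the $f_i$ slots, condition (2) of Theorem \ref{Landua 1} simplifies to $\sum_{i\in I}r_i\le |E(I)|$, not to $2\sum_{i\in I}r_i\le |E(I)|+\sum_{i\in I}d_G(i)$; the term $\sum_{i\in I}d_G(i)$ in the final inequality comes precisely from the $-\sum_{i\in I}d_G(i)$ hidden in the correct target vector, and your appeal to ``the relation between $\sum_{e_k\in E(I)}s_k$ and the degrees $d_G(i)$'' does not supply it, since $\sum_{e_k\in E(I)}s_k=|E(I)|$ differs from $\sum_{i\in I}d_G(i)$ by twice the number of internal edges of $I$. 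Likewise, the parity requirement in condition (1) is not ``automatic'': with your encoding $t_i=r_i+d^+_{D_0}(i)-d^-_{D_0}(i)$, which need not be even, and you cannot assume a solution exists to argue that it is (that is circular, since (1)--(3) are supposed to characterize solvability). With the correct encoding $t_i=2r_i-2d^-_{D_0}(i)$ is manifestly even, which is why the parity condition is vacuous and only $\sum_i t_i=0$, i.e.\ $\sum_i r_i=|E(G)|$, survives. Everything else in your plan (the choice of box constraints, the specialization of Theorem \ref{Landua 1}, the triviality of condition (3)) matches the paper; fix the encoding and the rest goes through.
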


\begin{proof}

For every orientation $D$ on $G$, from the identities $d_D^+(i)-d_D^-(i)=d_D^0(i), d_D^+(i)+d_D^-(i)=d_G(i)$,  we conclude that 
$d_D^0(i)=2 d_D^+(i)-d_G(i)$. So, a sequence $(r_1,...,r_n)$ of integers is a score vector of some orientation on $G$,  
if and only if the sequence $(2 r_1-d_G(1),...,2 r_n-d_G(n))$ is the signed degree sequence of some orientation on $G$. 
Now, using the notations in Theorem \ref{Landua 1}, one can easily show that  such an orientation on $G$ exists if and only if 
 there exist integers $0\leq x_{ e},y_e\leq 1$ ($e\in E(G)$) such that 
$$\sum_{i=1}^n (2 r_i-d_G(i)) f_i+\sum_{j=1}^m g_j=\sum_{e\in G} x_{e} z(e)+\sum_{e\in G} y_{e} z'(e).$$
Therefore, by setting $a_e=c_e=0$ and $b_e=d_e=1$ ($e\in E(G)$)  in Theorem  \ref{Landua 1}, we see that 
such an orientation on $G$ exists if and only if conditions (1), (2) and (3) in Theorem \ref{Landua 1} hold. Condition (3) trivially holds. Condition (2) is easily simplified and
we obtain  the desire inequalities. So it remains to see when the vector 
$$\begin{pmatrix}
t_1\\
\vdots\\
t_n
\end{pmatrix}= N_0\begin{pmatrix}
1\\
\vdots\\
1
\end{pmatrix}+\begin{pmatrix}
2 r_1-d_G(1)\\
\vdots\\
2 r_n- d_G(n)
\end{pmatrix},$$
satisfies the conditions in (1). But it is easy to see that $t_i=2 r_i -2d^-_{D_0}(i)$ (where $D_0$ is the orientation defined in Proposition \ref{directed Farkas graph 1})
and therefore the conditions in (1) are satisfied if and only if 
$\sum_{i=1}^n t_i=0 $ if and only if $\sum_{i=1}^n r_i=\sum_{i=1}^n d^-_{D_0}(i)=|E(G)|$.

\end{proof}

Note that if in the above corollary $G$ is the complete graph, then we obtain the well-known theorem of Landau regarding tournaments.

\end{subsection}

%%%%%%%%%%%%%%%%%%%%%%%%%%%%%%%%%%%%%%%%%%%%%%%%%%%%%%%%%%%%%%%%%%%%%%%%%%%%%%%%%%%%%%%%%%%%%%%%%%%%%%%%%%%
%%%%%%%%%%%%%%%%%%%%%%%%%%%%%%%%%%%%%%%%%%%%%%%%%%%%%%%%%%%%%%%%%%%%%%%%%%%%%%%%%%%%%%%%%%%%%%%%%%%%%%%%%%%

\end{section}

\end{document}